\title{Harmonic balls and the two-phase Schwarz function}
\author{ Henrik Shahgholian}
\address{Department of Mathematics, Royal Institute of Technology,
100 44 Stockholm Sweden}
\email{henriksh@math.kth.se}
\author{Tomas Sj\"o{}din}
\address{Department of Mathematics, Link\"o{}ping University, 581 83 Link\"o{}ping, Sweden}
\email{tosjo@mai.liu.se}
\thanks{The first author is partially supported by the Swedish Research Council.\newline
 { The authors wish to thank professor Bj\"orn Gustafsson for valuable comments and fruitful discussions. In particular example \ref{Schottky} was contributed by him.}}
\subjclass[2000]{Primary: 35R35, 31A05, 31B05, 31B20}
\keywords{Schwarz function, mean-value property, harmonic functions, two-phase free boundary, quadrature domains.}
\newtheorem{theo}{Theorem}[section]
\newtheorem{prop}[theo]{Proposition}
\newtheorem{lem}[theo]{Lemma}
\newtheorem{defi} {Definition}[section]
\newtheorem{cor}[theo] {Corollary}
\newtheorem{rem}[theo]{Remark}
\newtheorem{conj} {Conjecture}[section]
\newtheorem{quest}{Question}[section]
\newtheorem{example}{Example}[section]
\newcommand{\R}{\mathbb{R}}
\newcommand{\C}{\mathbb{C}}
\begin{document}

\begin{abstract}
Here we shall introduce the concept of harmonic balls in sub-domains of $\R^n$, through a  mean value property for
a sub-class of harmonic functions on such domains. In the complex plane, and for analytic functions, a similar concept fails to exist
due to the fact that analytic functions can not have prescribed data on the boundary. Nevertheless, a two-phase version of the problem
does exists, and gives rise to the generalization of the well-known Schwarz function to the case of a
two-phase Schwarz function.
Our primary goal is to derive simple properties for these problems, and tease the appetites of experts working on Schwarz function and related topics.
Hopefully these two concepts will  provoke further study of the topic.
\end{abstract}

\maketitle

%%%%%%%%%%%%%%%%%%%%%%%%%%%%%%%%%%%%%%%%%%%%%%%%
%%%%%%%%%%%%%%%%%%%%%%%%%%%%%%%%%%%%%%%%%%%%%%%%%
\section{Introduction and basic Notation}
We will be working in $\R^n$ ($n\geq2)$ and for a (signed) Radon measure $\mu$ with compact support in $\R^n$ we let $U \mu$ denote the Newtonian/logarithmic potential normalized so that $-\Delta U \mu = \mu$ in the sense of distributions. We will by $\delta_x$ denote a point mass at $x$ and by $\lambda$ we denote the Lebesgue measure. If furthermore $K \subset \R^n$ is Greenian (i.e. an open set which has a Green's function), then we let $G_K(\cdot,\cdot)$ denote its Green function, and we denote Green potentials by $G_K \mu$. We will regard this function as defined on all of $R^n$ and identically zero on $K^c$. We will also denote the sweeping (balayage) of a finite measure $\mu$ in $K$ onto $\partial K$ by $\mu^{K^c}$. It is defined in such a way that if $h$ is harmonic in $K$ and continuous on $\overline{K}$ then $\int h d \mu = \int h d (\mu^{K^c}),$ and it is furthermore given by $\mu^{K^c}= (\Delta G_K \mu) |_{K^c}$.
We will denote the open ball with center $x^0$ and radius $r$ by $B_r(x^0)$.
 
The paper will in a sense consist of two parts which are closely related. Both concern questions which to a large extent are motivated by the theory of quadrature domains and related topics. In the first part, which consists of section 2 and 3, we will study what we call {\bf harmonic balls} in a subdomain $K \subset \R^n$. These are subsets $D=D(x^0,\alpha)$ of $K$ which satisfy the mean value property
$$\alpha h(x^0) = \int_{D} h d \lambda,$$
for all functions $h$ which are harmonic in $D$ and (roughly speaking) vanish on $\partial D \cap \partial K$.
Here one could of-course in analogy with the theory of quadrature domains study more general measures than point masses, but most of the questions we are interested in here would have negative answers in more general settings (however the basic existence results, for instance, can be proved in the same manner with small changes under more general assumptions).

The second part (section 4), which will deal with what we call {\bf two-phase Schwarz functions}, can be motivated in two ways.
The first motivation is the failure to generalize the concept of harmonic balls to analytic balls, which is due to that an analytic function which vanishes on some non-trivial part of the boundary of for instance a simply connected domain has to be identically zero. But it turns out that one can still in some sense generalize the idea to two phases, where one balances the values of the function from both sides of the boundary.

The other motivation comes directly from the recent theory of two-phase quadrature domains (see \cite{Em-Pr-Sh 2010,GS3}).
Let us first recall that a (one-phase) quadrature domain for harmonic (analytic) functions in the plane is a bounded open set $D \subset \R^2$ such that for some distribution $\mu$ with compact support in $D$ we have 
$$\langle \mu, h \rangle = \int_D h d \lambda$$
for all integrable harmonic (analytic) functions $h$ in $D$. This is equivalent to that the function $u= U \mu - U (\lambda |_D)$ satisfies $u = |\nabla u|=0$ (respectively just $|\nabla u|=0$) in $D^c$. If we define the function $S =\overline{z} - 4 \partial u$ and $D$ is a quadrature domain, then $S$ satisfies $\overline{\partial} S = \mu$ in $D$ and $S(z) = \overline{z}$ on $\partial D$. In particular $S$ is analytic in $D \setminus \textrm{supp} (\mu)$. This is the definition of a one-sided (one-phase) Schwarz function with respect to $\partial D$.  Here it is enough that $D$ is a quadrature domain for analytic functions with respect to $\mu$. Furthermore this can be reversed. If for some domain $D$ there exists a one-sided Schwarz function $S$, i.e. a function $S$ which equals $\overline{z}$ on $\partial D$ and is analytic in $D \setminus C$ for some compact subset $C \subset D$, then $\overline{\partial} S = \mu$ has compact support in $D$ and $u= U \mu - U(\lambda |_D)$ satisfies $S =\overline{z} - 4 \partial u$, so in particular $|\nabla u|=0$ on $\partial D$. Hence $D$ is a quadrature domain for analytic functions with respect to $\mu$. 

The two-phase Schwarz function will be similarly related to two-phase quadrature domains. We recall that a two-phase quadrature domain for harmonic functions with respect to the pair $(\mu_+,\mu_-)$ of measures with disjoint compact supports and positive numbers $(\beta_+,\beta_-)$ is a pair of disjoint bounded open sets $(D_+, D_-)$ such that $\textrm{supp} (\mu_{\pm}) \subset D_{\pm}$ and the function 
\begin{equation}\label{little u}
u = (U \mu_+ - \beta_+ U (\lambda |_{D_+}))-(U \mu_- - \beta_- U (\lambda |_{D_-}))
\end{equation}
vanishes in $D^c$, where $D=D_+ \cup D_-$. (Note that we do not assume that the gradient vanishes which it will typically not do on $\partial D_+ \cap \partial D_-$). This is roughly equivalent to a two-phase quadrature identity for all harmonic functions which are continuous up to the closure of $D_+ \cup D_-$.
The function
$$S = \left\{ \begin{array}{lr} \beta_+ \overline{z} - 4 \partial u & \textrm{ in } D_+\\
								 -\beta_- \overline{z} - 4 \partial u & \textrm{ in } D_-\end{array}\right.,$$
satisfies 
$$\overline{\partial}S = \left\{ \begin{array}{rl} \beta_+ - \Delta u = \mu_+ & \textrm{ in } D_+\\
														-\beta_- - \Delta u = - \mu_- & \textrm{ in } D_-\end{array}\right.,$$
and (roughly speaking)
\begin{eqnarray*} &&S(z) = \beta_+\overline{z}, \quad z \in \partial D_+ \setminus \partial D_-\\
							&&S(z) =	-\beta_- \overline{z}, \quad z \in \partial D_- \setminus \partial D_+\\
							&&\lim_{w \rightarrow z, w \in D_+} S(w) - \lim_{w \rightarrow z, w \in D_-} S(w) =	(\beta_+ + \beta_-)\overline{z}, \quad z \in \partial D_+ \cap \partial D_-.
\end{eqnarray*}

Such a function will be an example of a two-phase Schwarz function for $(D_+,D_-)$.
We should note here that the gradient $\nabla u$ does not vanish on $\partial D_+ \cap \partial D_-$ but we have 
that the limit of the gradient from $D_+$ equals - the limit of that from $D_-$.

We end the paper with a short section listing possible areas for future research within this field.
\subsection{Notation}
\begin{itemize}
\item[$\cdot$] $U \mu$ : Newtonian (logarithmic if $n=2$) potential of $\mu$,
\item[$\cdot$] $G_K \mu$ : Green potential of $\mu$,
\item[$\cdot$] $\Delta$ : Laplacian,
\item[$\cdot$] $B_r (x)$ : open ball with center $r$ and radius $x$,
\item[$\cdot$] $\lambda$ : Lebesgue measure,
\item[$\cdot$] $\overline{D}$ : closure of $D \subset \R^n$,
\item[$\cdot$] $D^o$ : interior of $D \subset \R^n$.
\end{itemize} 
In $\R^2=\C$ we will also use the notation 
$$\partial = \frac{\partial}{\partial z},\quad \overline{\partial}=\frac{\partial}{\partial \overline{z}}.$$ 
\section {Harmonic balls}

Let $K\subset \R^n$ ($n\geq 2$) be a Greenian domain (open connected set). For a sub-domain $D \subset K$ we define 
$$\widetilde{H}_K(D) = \{ G_K \mu: \mu \textrm{ is a signed Radon measure with compact support in } K \setminus D\},$$
$$\widetilde{S}_K(D) = \{ G_K \mu: \mu \textrm{ is a signed Radon measure with compact support in } K, \mu |_D \leq 0  \}.$$

\begin{defi}[Harmonic/Subharmonic balls] Let $x^0 \in K$ and $\alpha >0$.

A subset $D(x^0,\alpha) \subset K$ is called a harmonic ball relative to $K$ if 
\begin{equation}\label{HB}
\int_{D(x^0,\alpha)} h (x) \ d\lambda = \alpha h(x^0), \qquad \forall h\in \widetilde{H}_K(D(x^0,\alpha)).
\end{equation}

A subset $D(x^0,\alpha) \subset K$ is called a subharmonic ball relative to $K$ if 
\begin{equation}\label{SB}
\int_{D(x^0,\alpha)} s (x) \ d\lambda \leq \alpha s(x^0), \qquad \forall h\in \widetilde{S}_K(D(x^0,\alpha)).
\end{equation}
We will refer to $x^0$ as the center of the ball, and $\alpha$ as the size of the ball.
\end{defi}

If $\overline{D(x^0,\alpha)} \subset K $ is a harmonic ball then it coincides with the standard ball with center $x^0$ and Lebesgue measure $\alpha$.
Indeed, this follows from the well-known mean value property  for harmonic functions over the balls (see \cite{gaier}), 
and the fact that balls are the only domains with this property (see \cite{ep}). 
Also, if $K$ were not Greenian then the only domains reasonably corresponding to harmonic balls would be of the form $B_r(x) \cap K$, and hence it is of no loss of generality to assume that $K$ is Greenian.

Subharmonic balls has also been introduced and studied by M. Sakai (see \cite{SA}), where they are called restricted quadrature domains.
\begin{theo}
Let $x^0 \in K$, $\alpha>0$ and $D \subset K$ be open, and define 
$$u_K = G_K(x^0,\cdot)- G_K (\lambda |_D).$$
Then the following holds:
\begin{itemize}
\item[(a)] D is a harmonic ball with center $x^0$ and size $\alpha$ if and only if $u_K =0$ in $K \setminus D.$
\item[(b)] D is a subharmonic ball with center $x^0$ and size $\alpha$ if and only if $u_K \geq 0$ in $K$ and $u_K =0$ in $K \setminus D.$  
\end{itemize}
\end{theo}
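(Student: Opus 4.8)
The plan is to reduce both statements to pointwise conditions on $u_K$ by Green's reciprocity. Throughout I write $\nu=\alpha\delta_{x^0}-\lambda|_D$, so that $u_K=G_K\nu=\alpha G_K(x^0,\cdot)-G_K(\lambda|_D)$ (the weight $\alpha$ being the one in the mean-value identity), and I use that membership in the two test classes is explicit: every $h\in\widetilde{H}_K(D)$ is $h=G_K\mu$ with $\mu$ a signed measure compactly supported in $K\setminus D$, and every $s\in\widetilde{S}_K(D)$ is $s=G_K\mu$ with $\mu$ compactly supported in $K$ and $\mu|_D\le 0$. The key computation, valid because $G_K$ is symmetric and the relevant potentials are finite, is to write, for $h=G_K\mu$ and using Fubini,
\[
\int_D h\,d\lambda=\int_D G_K\mu\,d\lambda=\int G_K(\lambda|_D)\,d\mu,\qquad \alpha h(x^0)=\int \alpha G_K(x^0,\cdot)\,d\mu,
\]
whence
\[
\int_D h\,d\lambda-\alpha h(x^0)=-\int u_K\,d\mu .
\]
I would first justify the Fubini step: for $y$ in the compact support of $\mu$ the function $x\mapsto G_K(x,y)$ is $\lambda|_D$-integrable with $\int_D G_K(x,y)\,d\lambda(x)=G_K(\lambda|_D)(y)$ finite and bounded on $\mathrm{supp}\,\mu$, so the interchange is legitimate; likewise $G_K(x^0,\cdot)$ is finite on $\mathrm{supp}\,\mu$ since $x^0\in D$.

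For part (a): by the displayed identity, $D$ is a harmonic ball exactly when $\int u_K\,d\mu=0$ for every signed $\mu$ with compact support in $K\setminus D$. Testing against $\mu=\pm\delta_y$ for an arbitrary $y\in K\setminus D$ (these lie in the test class, and $u_K(y)$ is finite there since $y\neq x^0$) shows this is equivalent to $u_K(y)=0$ for all $y\in K\setminus D$, i.e.\ $u_K=0$ in $K\setminus D$. No continuity argument is needed, because point masses already recover the pointwise values.

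For part (b): the subharmonic-ball (restricted quadrature) inequality is $\int_D s\,d\lambda\ge\alpha s(x^0)$ for all $s\in\widetilde{S}_K(D)$ (the direction consistent with subharmonicity of $s$ in $D$ and with the actual ball), which by the displayed identity reads $\int u_K\,d\mu\le 0$ for every $\mu$ compactly supported in $K$ with $\mu|_D\le 0$. For the implication $(\Leftarrow)$ I split $\mu=\mu|_D+\mu|_{K\setminus D}$: the piece on $K\setminus D$ contributes nothing because $u_K=0$ there, while on $D$ we have $u_K\ge 0$ and $\mu|_D\le 0$, so the integral is $\le 0$. For $(\Rightarrow)$ I test with $\pm\delta_y$, $y\in K\setminus D$ (admissible since then $\mu|_D=0$), forcing $u_K=0$ off $D$, and then with $\mu=-\delta_y$, $y\in D$ (admissible since $-\delta_y\le 0$), forcing $u_K(y)\ge 0$; together with $u_K=0$ in $K\setminus D$ this gives $u_K\ge 0$ in $K$. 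The only genuinely delicate points are the integrability/Fubini bookkeeping above and, implicitly, checking that the abstract classes $\widetilde{H}_K(D)$ and $\widetilde{S}_K(D)$ are rich enough that the quadrature (in)equalities are equivalent to the potential conditions — which is exactly what the reciprocity identity together with the availability of point masses provides. I expect this duality step, rather than any hard estimate, to be the crux.
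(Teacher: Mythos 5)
Your argument is correct and takes essentially the same route as the paper's own proof: the identity $\int_D h\,d\lambda-\alpha h(x^0)=-\int u_K\,d\mu$ via Fubini and the symmetry of $G_K$ for the ``if'' directions, and testing with point masses (i.e.\ the functions $\pm G_K(\cdot,y)$) for the ``only if'' directions. Your reading of the subharmonic-ball inequality as $\int_D s\,d\lambda\ge\alpha s(x^0)$ is the one the paper's proof actually establishes (the displayed definition has the inequality reversed, evidently a typo), so no discrepancy arises.
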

\begin{proof}
The only if statements are clear since $G_K(\cdot,x) \in \widetilde{H}_K(D)$ if $x \in K \setminus D$ and $-G_K(\cdot,x) \in \widetilde{S}_K(D)$ for all $x \in K$.
In the other direction we note that if $\mu$ has compact support in $K \setminus D$ and $u_K =0$ in $K \setminus D$ then we have by Fubini's theorem
$$\alpha G_K \mu (x^0) - \int_D G_K \mu d\lambda = \int (\alpha G_K \delta_{x^0} - G_K (\lambda |_D)) d \mu =0,$$
because the integrand on the right hand side is identically zero on the support of $\mu$.

If we instead only assume that $\mu \leq 0$ in $D$, and that $u_K \geq 0$ with equality in $K \setminus D$, then the same type of argument gives
$$\alpha G_K \mu (x^0) - \int_D G_K \mu d\lambda = \int (\alpha G_K \delta_{x^0} - G_K (\lambda |_D)) d \mu \leq  0,$$
because the integrand is nonnegative in $K$, zero outside of $D$ and $\mu \leq 0$ in $D$ by assumption.
\end{proof} 
\begin{rem}
We note that in case $K=D$, then the class $\widetilde{H}_K(D)$ only contains the zero function.
Hence $K$ itself will always be a harmonic ball with our definition. We will call this the trivial harmonic ball. The problem is that if $K$ is small compared to $\alpha$, then $K$ will also be the only candidate for a harmonic ball, and therefore we do not wish to exclude it either. But care has to be taken when formulating uniqueness results due to this.

It is also a bit unclear to what extent the definition, even if overlooking this trivial case, is enough in general to guarantee some sort of uniqueness in general. It does so in many cases, for instance of $K$ is a half-space as we will see later.
Due to this it might be natural to introduce some extra condition on harmonic balls. One such condition that seems natural is the following:
\end{rem}
\begin{defi}
A harmonic ball $D(x^0,\alpha)$ is said to be positive if the sweeping $(\alpha \delta_{x^0} - \lambda |_{D(x^0,\alpha)})^{K^c}$ of $\alpha \delta_{x^0} - \lambda |_{D(x^0,\alpha)}$ onto $\partial K$ is positive.
\end{defi}
It is implicit in this definition that the sweeping is well defined (i.e. $\lambda(D(x^0,\alpha)) < \infty$). It follows immediately that the assumption implies that $\lambda (D(x^0,\alpha)) \leq \alpha$ if $D(x^0,\alpha)$ is a positive harmonic ball. In particular if $\lambda(K) > \alpha$ then $K$ is not a positive harmonic ball with size $\alpha$ for any point in $K$.

It is furthermore clear that a subharmonic ball is always a positive harmonic ball in this sense.
\begin{quest}
Are there any non-trivial examples of harmonic balls that are not positive?
\end{quest}
\section{Existence and Uniqueness}
In this section we will first recall some basic facts about partial balayage, and use this to prove existence of subharmonic balls.
After this we study the uniqueness of a harmonic ball with given center $x^0$ and  size $\alpha$. 
The uniqueness of harmonic balls for $K=\R^n$ is well known (see e.g. \cite{Shahgholian 92a}).

\subsection{Partial balayage}
Here we recall some basic facts about the notion of (one-phase) partial
balayage, which was originally developed by Gustafsson and Sakai \cite{GusSa}%
. A recent exposition of it may be found in \cite{GS2}. For an open set $%
K\subset {\mathbb{R}}^{N}$ and a positive measure $\mu $ with compact
support in $K$ we define 
\begin{equation*}
V_{K}{\mu (x)}=\sup \left\{ v(x):v\text{ is subharmonic on }K\text{ and }%
v\leq U\mu +\frac{\left\vert \cdot \right\vert ^{2}}{2N}\text{ on }\mathbb{R}%
^{N}\right\} -\frac{\left\vert x\right\vert ^{2}}{2N}
\end{equation*}%
and then put $B_{K}\mu =-\Delta V_{K}\mu $. It turns out that there is a
measure $\nu $ such that 
\begin{equation}
B_{K}\mu =\lambda |_{\omega (K,\mu )}+\mu |_{\omega (K,\mu )^{c}}+\nu
=\lambda |_{\Omega (K,\mu )}+\mu |_{\Omega (K,\mu )^{c}}+\nu ,  \label{pb}
\end{equation}%
where 
\begin{equation*}
\omega (K,\mu )=\{V_{K}\mu <U\mu \}
\end{equation*}%
and 
\begin{equation*}
\Omega (K,\mu )=\bigcup \{U:U\subset K\text{ open and }B_{K}\mu =\lambda 
\text{ in }U\},
\end{equation*}%
and these are bounded open subsets of $K$. (Clearly $V_{K}\mu =U\mu $ on $%
K^{c}$.) Further, 
\begin{equation}
B_{K}\mu \leq \lambda \text{ \ on }K\text{\ \ \ and \ \ }\nu \geq 0,
\label{pb2}
\end{equation}%
and $\nu $ is supported by $\partial K\cap \partial \omega (K,\mu )$. We
note that $\omega (K,\mu )\subset \Omega (K,\mu )$ and that this inclusion
may be strict, even when $\mu $ has compact support contained in $\Omega
(K,\mu )$. Clearly these sets increase as $K$ increases and as $\mu $
increases. It will be convenient to define 
\begin{equation*}
W_{K}\mu =U\mu -V_{K}\mu ,
\end{equation*}%
whence $W_{K}\mu $\ is lower semicontinuous, $-\Delta W_{K}\mu \geq \mu
-\lambda $ on $K$ and $W_{K}\mu \geq 0$ on $\mathbb{R}^{N}$. Finally, if $K={%
{\mathbb{R}}^{N}}$, we will abbreviate the above notation to $V\mu $, $B\mu $%
, $\omega (\mu )$, $\Omega (\mu )$, and $W\mu $, respectively. In this case, 
$\nu =0$.

\subsection{Existence and uniqueness}
\begin{prop} \label{existsh}
For every $x^0 \in K$ and every $\alpha >0$ there is up to a Lebesgue null set a unique subharmonic ball $D(x^0,\alpha)$.
\end{prop}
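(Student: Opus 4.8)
The plan is to treat existence and uniqueness separately, relying in both cases on the characterization from the Theorem of Section~2: a set $D$ is a subharmonic ball with center $x^0$ and size $\alpha$ precisely when the Green potential $u_D := \alpha G_K(x^0,\cdot) - G_K(\lambda|_D)$ satisfies $u_D \ge 0$ in $K$ and $u_D = 0$ in $K\setminus D$.

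For existence I would apply one-phase partial balayage to the point mass $\mu = \alpha\delta_{x^0}$ and take $D := \omega(K,\mu)$. Since $U\mu = +\infty$ at $x^0$ while $V_K\mu$ is subharmonic, hence locally bounded above, we have $x^0\in\omega(K,\mu)$, so $\mu|_{\omega^c}=0$ and \eqref{pb} reduces to $B_K\mu = \lambda|_{\omega}+\nu$ with $\nu$ carried by $\partial K$. Consequently $-\Delta W_K\mu = \mu - B_K\mu = \alpha\delta_{x^0}-\lambda|_{\omega}$ in $K$, which is exactly $-\Delta u_D$ for $D=\omega$. Thus $g := W_K\mu - u_D$ is harmonic in $K$ (the potential-theoretic singularities at $x^0$ cancel), and I would compare $W_K\mu$ and $u_D$ by the maximum principle, using that $u_D$ vanishes on $\partial K$ and that $W_K\mu\ge 0$ tends to $0$ at regular boundary points, to conclude $u_D = W_K\mu$ in $K$. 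Then $u_D = W_K\mu \ge 0$ and $u_D$ vanishes exactly on $\{W_K\mu = 0\}=K\setminus\omega$, so part (b) of the characterization shows $D=\omega(K,\mu)$ is a subharmonic ball. The identity $\lambda|_{\omega}=\lambda|_{\Omega}$ forced by \eqref{pb} shows $\omega$ and $\Omega$ agree up to a null set, consistent with the statement.

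For uniqueness, let $D_1,D_2$ be two subharmonic balls with center $x^0$ and size $\alpha$, with potentials $u_i = \alpha G_K(x^0,\cdot) - p_i$, where $p_i := G_K(\lambda|_{D_i})$. Writing $A = \iint_{D_1\times D_2}G_K\,d\lambda\,d\lambda$ and $B_i = \iint_{D_i\times D_i}G_K\,d\lambda\,d\lambda$, I would first observe that the sign conditions $u_i\ge 0$ and $u_i\equiv 0$ off $D_i$ give at once
\begin{equation*}
\int_{D_2}u_1\,d\lambda = \int_{D_1\cap D_2}u_1\,d\lambda \le \int_{D_1}u_1\,d\lambda,
\end{equation*}
and symmetrically $\int_{D_1}u_2\,d\lambda \le \int_{D_2}u_2\,d\lambda$. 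Expanding both sides with Fubini and the symmetry of $G_K$ (so that $\int_{D_j}G_K(x^0,\cdot)\,d\lambda = p_j(x^0)$ and $\int_{D_2}p_1\,d\lambda = \int_{D_1}p_2\,d\lambda = A$), the terms $\alpha p_i(x^0)$ cancel when the two inequalities are added, leaving $B_1 + B_2 \le 2A$, that is
\begin{equation*}
\mathcal{E}(\chi_{D_1}-\chi_{D_2}) := \iint (\chi_{D_1}-\chi_{D_2})(x)\,G_K(x,y)\,(\chi_{D_1}-\chi_{D_2})(y)\,d\lambda\,d\lambda \le 0.
\end{equation*}
Since $G_K$ is the Green kernel, the Green energy is nonnegative, $\mathcal{E}(f) = \int|\nabla G_K f|^2\,d\lambda \ge 0$, with equality only if $G_K f\equiv 0$, whence $f = -\Delta(G_K f)=0$ a.e.; applied to $f = \chi_{D_1}-\chi_{D_2}$ this forces $\lambda(D_1\triangle D_2)=0$.

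The routine points (finiteness of $A$ and $B_i$, integrability of the point-mass singularity, Fubini) are harmless. The one genuinely delicate step is the boundary comparison in the existence argument: one must ensure that the part $\nu$ of the balayage swept onto $\partial K$ does not obstruct the identification $u_D = W_K\mu$, i.e.\ that $W_K\mu$ really has the vanishing boundary behavior needed for the maximum principle. I expect this to be the main obstacle, to be handled through the regularity theory of partial balayage (boundedness of $W_K\mu$ and its behavior at regular boundary points) rather than through any new idea.
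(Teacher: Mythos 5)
Your proposal is correct, and the existence half is essentially the paper's: both take $D=\omega(K,\alpha\delta_{x^0})$ from partial balayage, the paper simply asserting that the subharmonic-ball property holds ``by construction'' where you spell out the identification $u_D=W_K\mu$ (your worry about the boundary measure $\nu$ is harmless, since $\nu$ is carried by $\partial K\cap\partial\omega$ and so does not enter the distributional identity $-\Delta W_K\mu=\alpha\delta_{x^0}-\lambda|_{\omega}$ \emph{inside} $K$; the remaining identification is the standard Riesz decomposition in $K$). The uniqueness half, however, is genuinely different. The paper derives uniqueness from Proposition \ref{shdom}: for a harmonic ball $D$ and a subharmonic ball $\Omega$ with the same data, the function $G_K\lambda|_D-G_K\lambda|_{\Omega}$ is nonnegative in $K\setminus D$ and superharmonic in $D$, hence nonnegative everywhere by the minimum principle; applying this with two subharmonic balls in both orders forces equality of the Green potentials and hence of the sets a.e. You instead run the classical quadratic (Sakai-type) energy argument: test each $u_i$ against the other domain, add, and conclude that the Green energy of $\chi_{D_1}-\chi_{D_2}$ is nonpositive, hence zero. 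Your computation checks out ($B_1+B_2\le 2A$ is exactly $\mathcal{E}(\chi_{D_1}-\chi_{D_2})\le 0$), but note two trade-offs. First, your cancellation of the terms $\alpha p_i(x^0)$ and the rearrangement to $B_1+B_2\le 2A$ require $p_i(x^0)$, $A$ and $B_i$ to be finite, which ultimately rests on $\lambda(D_i)<\infty$ and boundedness-type information about subharmonic balls; the paper's maximum-principle route needs no energy integrals at all. Second, Proposition \ref{shdom} proves more than uniqueness of subharmonic balls --- it compares a subharmonic ball against an arbitrary \emph{harmonic} ball (yielding $\partial D\cap K\subset\overline{\Omega}$, which the paper reuses in its later corollaries) --- whereas your symmetric energy argument exploits the sign condition $u_i\ge 0$ on both sides and therefore only covers the subharmonic case. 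As a self-contained proof of the stated proposition your route is fine and arguably more elementary; it just does not deliver the auxiliary comparison the paper extracts along the way.
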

\begin{proof}
We note that if we let $D(x^0,\alpha) = \omega(K,\alpha \delta_{x^0})$ then it follows by construction that it is a subharmonic ball as stated.
We immediately get that $\lambda(\Omega(K,\alpha \delta_{x^0}) \setminus \omega(K,\alpha \delta_{x^0}))=0.$ 
The uniqueness will follow from proposition \ref{shdom} below.
\end{proof}
\begin{quest}
Are there any examples of $K,x^0,\alpha$ such that $\omega(K,\alpha \delta_{x^0}) \ne \Omega(K,\alpha \delta_{x^0})$?
\end{quest}
\begin{prop} \label{shdom}
Let $D$ be a harmonic ball with center $x^0$ and size $\alpha$ and $\Omega$ a subharmonic ball with center $x^0$ and size $\alpha$ in $K$. Then $G_K \lambda |_{\Omega} \leq G_K \lambda |_{D}$ in $K$ and $(\lambda |_D)^{K^c} \geq (\lambda |_{\Omega})^{K^c}$. Furthermore $\partial D \cap K \subset \overline{\Omega}.$
\end{prop}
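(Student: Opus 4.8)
The plan is to work throughout with the two potentials $u^D=\alpha G_K(x^0,\cdot)-G_K(\lambda|_D)$ and $u^\Omega=\alpha G_K(x^0,\cdot)-G_K(\lambda|_\Omega)$, whose behaviour is supplied by the first theorem: $u^D=0$ in $K\setminus D$, while $u^\Omega\geq 0$ in $K$ and $u^\Omega=0$ in $K\setminus\Omega$. The object carrying all the information is the difference $w:=G_K(\lambda|_D)-G_K(\lambda|_\Omega)=u^\Omega-u^D$. Since the poles at $x^0$ cancel, $w$ is continuous on $K$, vanishes on $K^c$, and satisfies $-\Delta w=\lambda|_D-\lambda|_\Omega$ in $K$. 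The first assertion of the proposition is precisely $w\geq 0$, and I expect the other two statements to follow from it, so the bulk of the work is to establish this inequality.

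First I would prove $w\geq 0$. Set $N=\{w<0\}$, an open subset of $K$. On $N$ one has $u^D=u^\Omega-w>u^\Omega\geq 0$, and since $u^D$ vanishes off $D$ this forces $N\subset D$. But on $D$ we have $\lambda|_D=\lambda$, so there $-\Delta w=\lambda-\lambda|_\Omega=(1-\mathbf 1_\Omega)\lambda\geq 0$, i.e.\ $w$ is superharmonic on $N$. As $w$ is continuous with $w=0$ on $\partial N\cap K$ and $w\to 0$ towards $\partial K$, the minimum principle gives $w\geq 0$ on $N$, contradicting $w<0$ unless $N=\emptyset$. This is the step I expect to require the most care: one must justify that the Green potentials involved vanish at $\partial K$ and control $w$ at infinity when $K$ is unbounded, which is where an integrability/finiteness hypothesis on $\lambda|_D$ enters.

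Granting $w\geq 0$, the inequality for the sweepings becomes a boundary computation. Writing $\sigma=\lambda|_D-\lambda|_\Omega$, the identity $-\Delta G_K\sigma=\sigma-\sigma^{K^c}$ together with $w=G_K\sigma$ gives $\sigma^{K^c}=\sigma+\Delta w$. Since $\sigma$ does not charge $\partial K$, on $\partial K$ this reads $\sigma^{K^c}=(\Delta w)|_{\partial K}$, and because $w\geq 0$ in $K$ while $w\equiv 0$ on $K^c$, the function meets its boundary values from above along $\partial K$ (a non-negative kink), so the singular part of $\Delta w$ there is a non-negative measure. By linearity of balayage this is exactly $(\lambda|_D)^{K^c}\geq(\lambda|_\Omega)^{K^c}$. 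For irregular $\partial K$ I would phrase this via the standard monotonicity/extremal characterisation of balayage rather than through a normal derivative.

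Finally, for $\partial D\cap K\subset\overline\Omega$, suppose $x\in\partial D\cap K$ with $x\notin\overline\Omega$, and choose a ball $B=B_r(x)\subset K\setminus\overline\Omega$ with $x^0\notin\overline B$ (possible since the pole of $u^D$ at $x^0$ forces $x^0\in D$, whereas $x\notin D$). On $B$ we have $u^\Omega=0$, so $w\geq 0$ yields $u^D\leq 0$ on $B$, while $-\Delta u^D=-\lambda|_D\leq 0$ makes $u^D$ subharmonic on $B$. Since $x\in K\setminus D$ we have $u^D(x)=0$, so $u^D$ attains its maximum $0$ at the interior point $x$; the strong maximum principle then forces $u^D\equiv 0$ on $B$, whence $\lambda|_D=\Delta u^D=0$ on $B$. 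This contradicts $x\in\partial D$ with $D$ open, which gives $\lambda(B\cap D)>0$. Hence no such $x$ exists, completing the argument.
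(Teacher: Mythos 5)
Your argument is correct and takes essentially the same route as the paper: the paper also works with the difference $u=G_K(\lambda|_D)-G_K(\lambda|_\Omega)$, gets $u\geq 0$ in $K\setminus D$ from the two characterizations and superharmonicity in $D$ (your detour through the set $\{w<0\}$ is the same minimum-principle argument), deduces the sweeping inequality from the Kato-type fact that $u\geq 0$ in $K$ and $u\equiv 0$ on $K^c$ force $(\Delta u)|_{K^c}\geq 0$, and proves the boundary inclusion by a strong minimum/maximum principle contradiction at a point of $(\partial D\setminus\overline{\Omega})\cap K$. Your sign flip to the subharmonicity of $u^D$ in the last step is only a cosmetic difference.
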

\begin{proof}
Since $G_K \lambda |_D = G_K \delta_{x^0} \geq G_K \lambda |_{\Omega}$ in $K \setminus D$ we see that $u = G_K \lambda |_D -G_K \lambda |_{\Omega} \geq 0$ in $K \setminus D$. And in $D$ we have $-\Delta u = \chi_D - \chi_{\Omega} \geq 0$, so $u$ is superharmonic in $D$. Hence $u \geq 0$ by the maximum principle. By Kato's inequality we have
$$(\lambda |_D)^{K^c} = (\Delta G_K (\lambda |_D)) |_{K^c} \geq (\Delta G_K (\lambda |_{\Omega})) |_{K^c} = (\lambda |_{\Omega})^{K^c}.$$

For the second part, assume that $x \in (\partial D \setminus \overline{\Omega}) \cap K$. Then $u$ is superharmonic and not identically zero close to $x$. Furthermore it is nonnegative and assumes its minimum value $0$ in $x$, which contradicts the minimum principle.
\end{proof}
\begin{rem}
Clearly the above proposition implies that subharmonic balls are unique up to a Lebesgue null set, since they have to produce the same Green potential. Indeed, it follows more or less directly from our definitions that we have 
$$\omega(K,\alpha \delta_{x^0}) \subset 
D(x^0,\alpha) \subset \Omega(K,\alpha \delta_{x^0})$$
for all subharmonic balls $D(x^0,\alpha)$.

It is also well known that $\lambda (\partial \Omega(K,\alpha 
\delta_{x^0}) \cap K)= \emptyset$, and hence it follows that $\Omega(K,\alpha \delta_{x^0})= (\overline{\omega(K,\alpha 
\delta_{x^0})})^o \cap K$. 
\end{rem}
\begin{cor}
Suppose that $\Omega$ is a subharmonic ball with center $x^0$ and size $\alpha$. If $K \setminus \Omega$
is connected, then any harmonic ball $D$ with center $x^0$ and size $\alpha$ such that $D \cup \overline{\Omega} \ne K$ satisfies $D \subset \Omega(K,\alpha \delta_{x^0})$. In particular, if $K$ is a half-space then the only harmonic ball with center $x^0$ and size $\alpha$, apart from the trivial one, is the subharmonic ball $\omega(K,\alpha \delta_{x^0})$
\end{cor}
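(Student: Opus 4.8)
The plan is to reduce the whole statement to the comparison already contained in Proposition~\ref{shdom} together with a single application of the strong maximum principle on the connected set $K\setminus\Omega$. First I would attach to the two balls their potentials
$$u=\alpha G_K(x^0,\cdot)-G_K(\lambda|_D),\qquad v=\alpha G_K(x^0,\cdot)-G_K(\lambda|_\Omega),$$
these being the functions $u_K$ of the characterization theorem of Section~2 (carrying the factor $\alpha$ dictated by its proof), for $D$ and for $\Omega$ respectively. Writing $N=K\setminus\Omega$, the hypotheses give three facts on $N$. Since $D$ is a harmonic ball, $u=0$ on $K\setminus D\supset N\setminus D$; since $\Omega$ is a subharmonic ball, $v=0$ on $N$; and since the centre satisfies $x^0\in\omega(K,\alpha\delta_{x^0})\subset\Omega$, the point mass does not meet $N$, so there $-\Delta u=-\chi_{D\cap N}\le 0$, i.e.\ $u$ is continuous and subharmonic on $N$.

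Next I would pin down the sign of $u$ and locate a maximum. The difference $u-v=G_K(\lambda|_\Omega)-G_K(\lambda|_D)$ is exactly the negative of the function shown to be nonnegative in Proposition~\ref{shdom}, so $u-v\le 0$; combined with $v=0$ on $N$ this gives $u\le 0$ on $N$. Moreover the hypothesis $D\cup\overline\Omega\ne K$ forces $K\setminus(D\cup\Omega)\ne\emptyset$, and on this nonempty open set both $u=0$ and $v=0$; hence $u$ attains its supremum value $0$ at an interior point of $N$.

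Now comes the decisive step. Because $N=K\setminus\Omega$ is connected and $u$ is subharmonic there with $u\le 0$ and $\sup_N u=0$ attained inside, the strong maximum principle forces $u\equiv 0$ on $N$. Consequently $\Delta u=0$ on $N$ in the distributional sense, while on $N$ one has $\Delta u=\chi_{D\cap N}$; hence $\lambda(D\cap N)=0$, and as $D\cap N$ is open it must be empty. Thus $D\subset\Omega\subset\Omega(K,\alpha\delta_{x^0})$, which is (even slightly more than) the assertion. I expect the only real care to lie in setting up the identity $-\Delta u=\alpha\delta_{x^0}-\chi_D$ correctly and in verifying that $u$ is genuinely continuous and subharmonic on $N$ — which is precisely where $x^0\notin N$ enters — the rest being immediate.

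For the half-space statement I would apply the above with $\Omega=\omega(K,\alpha\delta_{x^0})$. This subharmonic ball is bounded, so its complement $K\setminus\omega$ in the half-space is connected and the main part applies. If a harmonic ball $D$ meets $K\setminus\overline\omega$, then since $\partial D\cap K\subset\overline\omega$ by Proposition~\ref{shdom} the set $D$ is relatively clopen in the connected set $K\setminus\overline\omega$ and therefore contains that whole (unbounded) component, whence $D\cup\overline\Omega=K$; this is exactly the excluded, trivial case. Every remaining harmonic ball is contained in $\overline\omega$, so by the first part together with the reverse inequality $G_K(\lambda|_D)\ge G_K(\lambda|_\Omega)$ of Proposition~\ref{shdom} its Green potential coincides with that of $\omega$, forcing $D=\omega(K,\alpha\delta_{x^0})$ up to a null set. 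The only additional point, which I would dispatch with a short remark, is that a co-bounded $D\ne K$ cannot be a harmonic ball, so that the excluded case indeed reduces to the trivial ball $K$.
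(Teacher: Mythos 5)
You take a genuinely different route from the paper --- the paper's argument is purely topological, while you push everything through the potentials and the strong maximum principle --- but your decisive step has a real gap. The strong maximum principle applies to a subharmonic function on a \emph{connected open} set, whereas $N=K\setminus\Omega$ is relatively closed in $K$; the function $u$ is subharmonic only on the interior $K\setminus\overline{\Omega}$, and connectedness of $K\setminus\Omega$ does not imply connectedness of $K\setminus\overline{\Omega}$ (an annulus minus a radial slit has connected complement, while the complement of its closure is disconnected). Consequently your argument only yields $u\equiv 0$ on the single component of $K\setminus\overline{\Omega}$ containing the witness point drawn from $K\setminus(D\cup\overline{\Omega})$, and nothing in it prevents $D$ from containing some other component of $K\setminus\overline{\Omega}$ --- which is precisely the situation the corollary has to exclude. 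There are also smaller slips: $K\setminus(D\cup\overline{\Omega})$ and $D\cap N$ need not be open (though the points you need do lie in the open set $K\setminus\overline{\Omega}$, and $D\setminus\overline{\Omega}$ is open, which is what you should use); and in the half-space part the inference ``$\omega$ is bounded, hence $K\setminus\omega$ is connected'' is false as stated (a bounded annulus is a counterexample) --- the connectedness there comes from the starshapedness of $\omega(K,\alpha\delta_{x^0})$ established in the theorem on starshaped $K$, not from boundedness.

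For comparison, the paper avoids the potentials entirely and uses only the inclusion $\partial D\cap K\subset\overline{\Omega}$ from Proposition \ref{shdom}: if $D\setminus\overline{\Omega}\neq\emptyset$, then $D\cap(K\setminus\overline{\Omega})$ is a nonempty relatively clopen subset of $K\setminus\overline{\Omega}$ (clopen because $\partial D$ cannot meet $K\setminus\overline{\Omega}$), so by connectedness $D\supset K\setminus\overline{\Omega}$, i.e.\ $D\cup\overline{\Omega}=K$, which is excluded; hence $D\subset\overline{\Omega}$, and since $D$ is open and $\lambda(\partial\Omega\cap K)=0$ one gets $D\subset(\overline{\Omega})^{o}\cap K=\Omega(K,\alpha\delta_{x^0})$. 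Admittedly this argument also really runs on $K\setminus\overline{\Omega}$ rather than on $K\setminus\Omega$, so the imprecision about which complement is connected is shared with the paper; but the topological route needs no information about where the supremum of $u$ is attained and no sign analysis of $u$ inside $D$, so it closes in one step what your maximum-principle argument leaves open. If you repair your proof by working on $K\setminus\overline{\Omega}$ and observing that each of its components is either contained in $D$ or disjoint from $D$, you will find you have reconstructed the paper's argument and that the maximum principle is no longer needed.
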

\begin{proof}
We note that it is enough to prove that $D \subset \overline{\Omega}$, because it is well known that  $\partial \Omega \cap K$ has zero Lebesgue measure, and hence $\Omega(K,\alpha \delta_{x^0})$ equals the interior of $\overline{\Omega}$. 
Since $K \setminus \Omega$ is connected we have that if $x \in D \setminus \overline{\Omega}$, then either $(\partial D 
\setminus \overline{\Omega}) \cap K$ is nonempty which contradicts the fact that $\partial D \cap K \subset \overline{\Omega}$
, or $D$ contains $K \setminus \overline{\Omega}$ which is not the case by assumption.

For the last part we simply note that it is easy to see that if $K$ is a half-space, then $\omega(K,\alpha \delta_{x^0}) =\Omega(K,\alpha \delta_{x^0})$ and also that $K \setminus \omega(K,\alpha \delta_{x^0})$ is connected.
\end{proof}
\begin{quest}
Are there any examples of non-trivial harmonic balls which are not subharmonic balls?
\end{quest}
We will now prove that in case $K$ is starshaped with respect to $x^0$ then so are the subharmonic balls centered at $x^0$.
\begin{theo}
If $K$ is starshaped with respect to $x^0$, then so is $\omega(K,\alpha \delta_{x^0})$. In particular $\Omega(K,\alpha \delta_{x^0}) = \omega(K,\alpha \delta_{x^0})$.
\end{theo}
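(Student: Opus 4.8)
The plan is to reduce everything to the partial-balayage potential and to prove starshapedness via the sign of the radial derivative. After a translation we may assume $x^0=0$, and we work with $w:=W_K(\alpha\delta_0)=U(\alpha\delta_0)-V_K(\alpha\delta_0)$, so that $\omega:=\omega(K,\alpha\delta_0)=\{w>0\}$. From the facts recalled in the partial balayage subsection, $w\geq0$ on $\R^n$, $w=0$ on $K^c$, and in $\omega$ we have $-\Delta w=\alpha\delta_0-\lambda-\nu$ with $\nu$ carried by $\partial K$; hence in the interior of $\omega$ away from the origin $\Delta w\equiv1$, while $w$ carries the singularity $\alpha E$ of the fundamental solution $E$ of $-\Delta$ at $0$. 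Standard obstacle-problem regularity gives $w\in C^{1,1}_{loc}(K\setminus\{0\})$ together with the free boundary condition $\nabla w=0$ on $\partial\omega\cap K$. I would first observe that $\omega$ is connected: since $0\in\omega$, any other component $\omega'$ would have $0\notin\overline{\omega'}$, hence would carry a function with $\Delta w=1$ and (by continuity, and since $\nu$ sits on $\partial K$) $w=0$ on all of $\partial\omega'$, forcing $w\leq0$ in $\omega'$ against $w>0$ there.

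Everything then reduces to showing that $w$ is nonincreasing along rays from the origin, i.e. that the radial derivative $\phi:=x\cdot\nabla w$ satisfies $\phi\leq0$ in $\omega\setminus\{0\}$; indeed $\frac{d}{ds}w(sx)=\tfrac1s\phi(sx)\leq0$ then gives $w(sx)\geq w(x)$ for $s\in(0,1]$, so $x\in\omega\Rightarrow sx\in\omega$, which is exactly starshapedness about $0$. The key computation is that $\phi$ is subharmonic: since $\Delta w\equiv1$ there, $\Delta\phi=x\cdot\nabla(\Delta w)+2\Delta w=2$ in $\omega\setminus\{0\}$. It then remains to check $\phi\leq0$ on the boundary of $\omega\setminus\overline{B_\varepsilon(0)}$ and let $\varepsilon\to0$: on the free boundary $\partial\omega\cap K$ we have $\nabla w=0$, so $\phi=0$; near the origin $\phi\sim(2-n)\alpha E\to-\infty$ for $n\geq3$ and $\phi\to-\alpha/(2\pi)<0$ for $n=2$, so $\phi<0$ on $\partial B_\varepsilon(0)$ for small $\varepsilon$; and on $\partial K\cap\partial\omega$ the starshapedness of $K$ enters: since $w\equiv0$ on $\partial K$ its tangential gradient vanishes, so $\phi=(x\cdot\eta)\,\partial_\eta w$ for the outward unit normal $\eta$, where starshapedness gives $x\cdot\eta\geq0$ and $w\geq0$ inside with $w=0$ on $\partial K$ gives $\partial_\eta w\leq0$, whence $\phi\leq0$. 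The weak maximum principle then yields $\phi\leq0$, and the strong maximum principle (using $\Delta\phi=2\neq0$ and connectedness of $\omega$) upgrades this to $\phi<0$ in $\omega\setminus\{0\}$.

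Strict radial monotonicity gives both conclusions. Starshapedness is immediate. For $\Omega=\omega$ I would invoke the identity recalled earlier, $\Omega(K,\alpha\delta_0)=(\overline{\omega})^o\cap K$: if $y\in(\overline{\omega})^o\cap K$ with $y\neq0$, then a slightly dilated point $(1+\varepsilon)y$ still lies in $\overline{\omega}$, so the open segment from $0$ to it meets $\omega$ beyond $y$, and strict monotonicity forces $w(y)>0$; thus $(\overline{\omega})^o\cap K\subset\omega$ and equality holds.

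The main obstacle is the boundary term on $\partial K$: the normal-derivative argument presupposes enough regularity of $w$ and of $\partial K$ up to $\partial K$ to speak of $x\cdot\eta$ and $\partial_\eta w$. For smooth starshaped $K$ this is clean. For a general starshaped $K$ I would circumvent it by a smooth starshaped exhaustion $K_j\uparrow K$, built from mollified radial functions, applying the above to each $\omega_j=\omega(K_j,\alpha\delta_0)$ and using the monotonicity (and continuity) of partial balayage in $K$ to get $\omega_j\uparrow\omega$; an increasing union of sets starshaped about the common center $0$ is again starshaped about $0$. The remaining ingredients, namely the $C^{1,1}$ regularity and the free boundary condition $\nabla w=0$, are standard for this obstacle problem and I would simply cite them.
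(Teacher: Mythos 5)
Your proof is essentially the paper's own argument: the paper likewise reduces to $x^0=0$ and a smooth (implicitly starshaped) exhaustion $K_n\uparrow K$, forms $u=\alpha G_K(x^0,\cdot)-G_K(\lambda|_{\omega})$ and the radial derivative $w(x)=x\cdot\nabla u$, computes $\Delta w=2$ in $\omega\setminus\{0\}$, checks $w<0$ near the origin, $w=0$ on the free boundary, and $w\leq 0$ on $\partial K\cap\partial\omega$ via starshapedness of $K$ (the paper uses the one-sided difference quotient along the ray where you use $x\cdot\eta\geq 0$ and $\partial_\eta u\leq 0$, which is the same estimate), and then applies the strong maximum principle. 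The only soft spot is your last step for $\Omega(K,\alpha\delta_{0})=\omega$: from $(1+\varepsilon)y\in\overline{\omega}$ alone the open segment from $0$ to $(1+\varepsilon)y$ need not meet $\omega$ beyond $y$, so one should instead invoke $\lambda(\partial\omega\cap K)=0$ (or pick a point of $\omega$ near $(1+\varepsilon)y$ and use strict radial monotonicity along \emph{its} ray); the paper asserts this clause without proof, so your attempt is if anything more explicit.
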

\begin{proof} 
Without loss of generality we may assume that $x^0 =0$. Furthermore if we exhaust $K$ by domains $K_n$, then it is easy to see that $\omega(K_n,\alpha \delta_{x^0})$ increases to $\omega=\omega(K,\alpha \delta_{x^0})$. Hence we may without loss of generality assume that $K$ has a smooth boundary.

Now let 
$$u = \alpha G_K(x^0,\cdot) - G_K (\lambda |_{\omega}) \textrm{ in } K,$$
and
$$w(x)= x \cdot \nabla u(x) \textrm{ in } K \setminus \{0\}.$$
Since $\partial K$ is smooth the function $u$ suitably extended to $K^c$ belongs to $C^1(\R^n),$ which will be used below.

The function $w$ satisfies
$$\Delta w (x) = x \cdot \nabla (\Delta u)(x) + 2 \Delta u (x) = 2 \textrm{ in } \omega \setminus \{0\}.$$
Hence it is subharmonic in $\omega \setminus \{0\},$ and furthermore it is clear that close to $0$ $w$ is negative (since $u$ goes towards infinity as we approach $0$).
On the other hand, if $x \in \partial \omega$, then $x \cdot \nabla u(x)$ is non-positive. This follows since if $x \in \partial \omega \cap K$, then $\nabla u(x)=0$, and hence $w(x)=0$, and if $x \in \partial K \cap \partial \omega$
then we have
$$w(x) = x \cdot \nabla u(x) = |x| \lim_{h \rightarrow 0^+} \frac{u(x-hx)-u(x)}{-h} \leq 0,$$
(above we used that $K$ is starshaped).
From the strong maximum principle it follows that $w$ is strictly negative in $\omega \setminus \{0\},$ and from this it follows immediately that $\omega$ must be starshaped with respect to $0$, because if $x \in \omega$ then the set
$\{t: 0 \leq t \leq 1, tx \in \omega\}$ contains all points close to $0$ and $1$, and must be connected. 
\end{proof}
We end this section with some results for positive harmonic balls.
\begin{lem}  \label{pmeas} Suppose $D \subset K$ is open and that there is an open set $T \subset \R^n$ such that $L = T \cap \partial D = T \cap \partial K$ and  $L$ is non-polar. Then for every $t >0$ we have that 
$$((1+t) \lambda |_D - \lambda |_{\omega(K,(1+t) \lambda |_D)})^{K^c}(L) >0.$$
\end{lem}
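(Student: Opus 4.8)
The plan is to translate the claim into a statement about the boundary measure produced by partial balayage of $\mu = (1+t)\lambda|_D$, and then to prove that this measure charges $L$ by a maximum-principle argument. Write $\Omega = \omega(K,\mu)$, let $\nu \ge 0$ be the measure from $(\ref{pb})$ carried by $\partial K \cap \partial \Omega$, set $\rho = (1+t)\lambda|_D - \lambda|_\Omega$, and put $W = W_K\mu$. First I would observe that $D \subset \Omega$ up to a null set: on the interior of $\Omega^c$ the identity $(\ref{pb})$ gives $B_K\mu = \mu|_{\Omega^c}$, so on $D \cap \Omega^c$ one would have $B_K\mu = (1+t)\lambda$, contradicting $B_K\mu \le \lambda$ from $(\ref{pb2})$ unless $\lambda(D \cap \Omega^c)=0$. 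Consequently $\mu|_{\Omega^c}=0$ and $(\ref{pb})$ reduces to $B_K\mu = \lambda|_\Omega + \nu$. Combining this with $-\Delta W = \mu - B_K\mu$ then yields $\Delta W = \nu - \rho$ as measures on $\R^n$.

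Next I would identify the sweeping of $\rho$ with $\nu$. Since $W \ge 0$, $W = 0$ on $K^c$, and $\nu$ is carried by $\partial K$, in the open set $K$ we have $-\Delta W = \rho$; thus $W$ is exactly the Green potential $G_K\rho$. Comparing the defining relation $-\Delta G_K\rho = \rho - \rho^{K^c}$ (coming from $G_K\rho = U\rho - U\rho^{K^c}$) with $\Delta W = \nu - \rho$, and noting that both $\rho^{K^c}$ and $\nu$ live on $\partial K$ while $\rho$ is absolutely continuous, gives $\rho^{K^c} = \nu$. Hence the lemma is equivalent to the positivity statement $\nu(L) > 0$.

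To prove $\nu(L) > 0$ I would argue by contradiction, assuming $\nu(L)=0$. Because $\nu \ge 0$ and $T \cap \partial K = L$, this forces $\nu$ to vanish on all of $T$. On $T$, using $D \subset \Omega$ together with the fact that neither $L$ nor $T \cap K^c$ carries any of the absolutely continuous measure $\rho$, we get $\rho = t\lambda|_{T \cap D}$; hence $\Delta W = -t\lambda|_{T \cap D} \le 0$ on $T$, so $W$ (which is continuous because $\mu$ has bounded density) is superharmonic there. Now $W = G_K\rho \ge 0$ vanishes quasi-everywhere on $K^c$, and since $L$ is non-polar there is a point $x_0 \in L$ with $W(x_0)=0$; this $x_0$ is an interior point of $T$ at which the nonnegative superharmonic function $W$ attains its minimum. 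By the minimum principle $W \equiv 0$ on the component of $T$ containing $x_0$, so $\Delta W = 0$ there, contradicting $\Delta W = -t\lambda|_{T \cap D}$ with $\lambda(T \cap D) > 0$ (which holds since $x_0 \in \partial D$). Therefore $\nu(L) > 0$.

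The delicate step is the last one. Because $L$ is merely non-polar and may be highly irregular, Hopf-type boundary estimates are unavailable, and the essential role of the hypothesis is precisely to secure a genuine boundary point $x_0 \in L$ at which $W$ actually equals $0$: at irregular (polar) points a Green potential need not attain its boundary value, and indeed if $L$ were polar then $\nu(L)=0$ automatically and the conclusion would fail, so the non-polarity assumption is sharp. A secondary technical point is that $\mu = (1+t)\lambda|_D$ need not have compact support in $K$ when $\overline{D}$ meets $\partial K$; I would handle this either by working directly with the variational definition of $V_K\mu$, or by exhausting $D$ from within $K$, so that $(\ref{pb})$ and $(\ref{pb2})$ continue to hold.
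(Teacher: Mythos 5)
Your reduction of the lemma to the statement $\nu(L)>0$ for the balayage measure $\nu$ is fine, but the contradiction argument has a genuine gap at the step ``$\rho = t\lambda|_{T\cap D}$ on $T$.'' From $\rho=(1+t)\lambda|_D-\lambda|_{\Omega}$ and $D\subset\Omega$ a.e.\ you actually get $\rho|_T=t\lambda|_{T\cap D}-\lambda|_{T\cap(\Omega\setminus D)}$, and nothing in the hypotheses forces $T\cap\Omega\subset\overline{D}$. The hypothesis $T\cap\partial D=T\cap\partial K$ does \emph{not} imply $T\cap K=T\cap D$: take $K$ to be a disc minus a slit, $D$ the part of $K$ above the slit, and $T$ a small disc centred on the slit; then $T\cap K$ has a second component lying in $K\setminus\overline{D}$, and for $t$ large the saturated set $\Omega=\omega(K,(1+t)\lambda|_D)$ flows around the slit and occupies part of it. There $\Delta W=+\lambda|_{\Omega}>0$, so $W$ is subharmonic, not superharmonic, on that side of $L$, and the interior minimum principle at $x_0\in L$ is unavailable. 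A one-sided version does not rescue you: a nonnegative function with $-\Delta W=t\lambda$ on $T\cap D$ and zero boundary values on $L$ is perfectly consistent (it is essentially the torsion function), so the contradiction really needs both sides, i.e.\ exactly the configuration that can fail. A secondary soft spot, which you partly flag yourself, is the assertion that $W$ is continuous and vanishes at some $x_0\in L$: $V_K\mu$ need not be continuous up to $\partial K$, and what you need is that the canonical (l.s.c.) superharmonic representative of $W$ vanishes at quasi-every point of $L$, so that non-polarity yields a usable $x_0$; as written the role of non-polarity is misplaced, since $W=0$ pointwise on all of $K^c$ by definition.

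For comparison, the paper's proof is quite different and sidesteps the geometry of $T$ entirely. It sets $\mu=(\lambda|_D)^{D^c}$, the sweeping of $\lambda|_D$ onto $\partial D$, observes that $\lambda|_D+t\mu$ and $(1+t)\lambda|_D$ have the same Green potential off $D$, and deduces by an obstacle-problem comparison that
$$B_K(\lambda|_D+t\mu|_K)+t\mu|_{K^c}=B_K((1+t)\lambda|_D),$$
so the boundary measure of $B_K((1+t)\lambda|_D)$ dominates $t\mu|_{K^c}$. Since $L$ is a non-polar subset of $\partial D$ it has positive harmonic measure with respect to $D$, whence $\mu(L)>0$. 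This uses the non-polarity of $L$ only through harmonic measure seen from inside $D$, and is insensitive to what $K$ and $\Omega$ do on the far side of $L$. If you want to salvage a direct PDE argument, you would need to exploit the absence of a singular part of $\Delta W$ on $L$ (the ``no jump of the normal derivative'' content of $\nu|_L=0$) rather than a two-sided minimum principle.
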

\begin{proof}
let $\mu = (t \lambda |_D)^{D^c}.$  Then we have
$$G_K(\lambda |_D +t \mu |_K) = G_K ((1+t) \lambda |_D) \textrm{ in } K \setminus D.$$
This follows because by definition we have
$$w=U (\lambda |_D + t \mu ) - U((1+t) \lambda _D) =0 \textrm{ in } D^c.$$
Since this function satisfies $w=0$ on $\partial K$ and $-\Delta w = \lambda |_D + t \mu -(1+t) \lambda |_D,$
we see that it equals $G_K(\lambda _D +t \mu |_K) - G_K ((1+t) \lambda_D) \textrm{ in } K$.

Now if $v$ satisfies $v \leq G_K((1+t)\lambda |_D)$ and $-\Delta v \leq 1$ in $K$, then it follows immediately from the maximum principle that $v \leq G_K( \lambda |_D + t \mu |_K)$ in $K$. Hence we have
$$ B_K (\lambda |_D + t \mu |_K) + t \mu |_{K^c} = B_K ((1+t) \lambda |_D).$$
In particular $B_K ((1+t) \lambda |_D) |_{K^c} \geq t \mu |_{K^c}$.
Since the harmonic measure of $L$ with respect to $D$ is easily seen to be positive under the stated hypothesis it also follows that $\mu (L) >0$, and hence the lemma is proved.
\end{proof}
\begin{theo}
If every component of $\Omega(K,\alpha \delta_{x^0})^c$ contains some non-polar component of $K^c$ then $D(x^0,\alpha) \subset \Omega(K,\alpha \delta_{x^0})$ for every positive harmonic ball $D(x^0,\alpha)$.  
\end{theo}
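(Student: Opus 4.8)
My plan is to argue by contradiction. Write $\Omega=\Omega(K,\alpha\delta_{x^0})$ and $\omega=\omega(K,\alpha\delta_{x^0})$, and suppose that the positive harmonic ball $D=D(x^0,\alpha)$ is not contained in $\Omega$. Since $\Omega$ is the interior of $\overline{\omega}\cap K$ and $\partial\Omega\cap K$ is Lebesgue null, this forces $D\setminus\overline{\Omega}\neq\emptyset$. The first step is to upgrade the inclusion $\partial D\cap K\subset\overline{\Omega}$ from Proposition~\ref{shdom} to the statement that $D\setminus\overline{\Omega}$ is a union of connected components of $K\setminus\overline{\Omega}$: if a component $W$ of $K\setminus\overline{\Omega}$ meets $D$ without being contained in it, then by connectedness $W$ meets $\partial D$, and as $W\subset K$ and $W\cap\overline{\Omega}=\emptyset$ this contradicts $\partial D\cap K\subset\overline{\Omega}$. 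So I may fix a single component $W\subset D$ of $K\setminus\overline{\Omega}$, with $W\cap\overline{\Omega}=\emptyset$.

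Next I would bring in the hypothesis to locate a good piece of $\partial K$ on the boundary of $W$. The set $\overline{W}$ cannot be contained in $K$: otherwise $\partial W\subset\partial\Omega\cap K$, so $\overline{W}$ would be a component of $\Omega^c$ lying entirely inside $K$ and containing no component of $K^c$, contrary to assumption. Hence $\overline{W}$ meets $\partial K$, and since the component of $\Omega^c$ containing $W$ also contains a non-polar component $F$ of $K^c$, I expect the coincidence set $L:=\partial W\cap\partial K\subset\partial D\cap\partial K$ to be non-polar and to admit an open $T$ with $L\cap T=T\cap\partial D=T\cap\partial K$ (near such points the $K$-side is entirely $W\subset D$). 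This is precisely the configuration of Lemma~\ref{pmeas}, and establishing the non-polarity of $L$ together with this local coincidence is the step I expect to be the main obstacle: one has to rule out that $W$ only ``pinches'' $\partial K$ along a polar set while its $\Omega^c$-component reaches $F$ through a thin channel, and this is exactly where the non-polarity of $F$ and the balayage estimate of Lemma~\ref{pmeas} must be used.

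Granting such an $L$, the contradiction comes from the defining positivity of $D$. Consider $v=\alpha G_K(x^0,\cdot)-G_K(\lambda|_D)$, the potential appearing in part~(a) of the first theorem of Section~2. On $W\subset K\setminus\omega$ the subharmonic-ball identity gives $\alpha G_K(x^0,\cdot)=G_K(\lambda|_\Omega)$, so $v=-(G_K(\lambda|_D)-G_K(\lambda|_\Omega))\le 0$ there by Proposition~\ref{shdom}. Moreover $x^0\notin W$ and $W\subset D$, whence $\Delta v=\lambda|_D-\alpha\delta_{x^0}=1$ on $W$; thus $v$ is subharmonic and $\le 0$ on $W$ and cannot vanish identically, so by the strong maximum principle $v<0$ throughout $W$, while $v=0$ on $L$ since Green potentials vanish on $\partial K$. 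A Hopf-type boundary-point estimate --- made rigorous on the non-polar set $L$ through Lemma~\ref{pmeas} --- then shows that the inward normal derivative of $v$, and hence the swept measure $\sigma:=(\alpha\delta_{x^0}-\lambda|_D)^{K^c}=(\Delta v)|_{K^c}$, is strictly negative on $L$. This contradicts the positivity $\sigma\ge 0$ of the harmonic ball $D$, completing the argument.
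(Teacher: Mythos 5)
Your reduction to a component $W$ of $K\setminus\overline{\Omega}$ contained in $D$ is exactly the paper's opening move, and your observation that $v=\alpha G_K(x^0,\cdot)-G_K(\lambda|_D)$ is subharmonic, strictly negative on $W$, and vanishes quasi-everywhere on $\partial K$ is correct. The argument breaks down at the last step. A Hopf-type boundary estimate is not available here: $K$ is only assumed Greenian, so near $L$ the complement $K^c$ may be, say, a slit of zero Lebesgue measure, and the swept measure $\sigma=(\alpha\delta_{x^0}-\lambda|_D)^{K^c}=(\Delta v)|_{K^c}$ is just a measure with no normal-derivative interpretation. What one can actually extract from $v\le 0$ on the $K$-side of $T$ together with $v=0$ on $K^c$ is the Kato-type inequality $(\Delta(-v))|_{K^c}\ge 0$ on $L$, i.e.\ $\sigma|_L\le 0$; combined with positivity this yields only $\sigma|_L=0$, which is not by itself a contradiction in general (when $K^c$ is thin near $L$, a nonnegative supersolution may vanish q.e.\ on the non-polar set $L$ while its l.s.c.\ regularization stays positive, so no minimum-principle contradiction arises). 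Your appeal to Lemma~\ref{pmeas} to ``make the Hopf estimate rigorous'' does not fill this hole: that lemma concerns $((1+t)\lambda|_D-\lambda|_{\omega(K,(1+t)\lambda|_D)})^{K^c}$, and its proof hinges on the excess mass $t\lambda|_D$ with $t>0$ being swept onto $L$ with positive harmonic measure; it says nothing about $\sigma$ itself, where the excess is zero.

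The missing idea is precisely the paper's perturbation in $t$. One inflates to $(1+t)\lambda|_D$, shows via Kato's inequality that $\omega_t=\omega(K,(1+t)\lambda|_D)$ is again a positive harmonic ball of size $(1+t)\alpha$, chooses $t$ small enough that $\overline{\Omega(K,(1+t)\alpha\delta_{x^0})}$ still misses a non-polar piece $L$ of $\partial K$ adjacent to $W$ (this is where the hypothesis on the components of $\Omega(K,\alpha\delta_{x^0})^c$ enters, and it is also the localization step you yourself flagged as the obstacle), and then plays Lemma~\ref{pmeas}, which forces a strictly positive amount of swept mass onto $L$, against the comparison of $\omega_t$ with $\Omega(K,(1+t)\alpha\delta_{x^0})$, which by another application of Kato's inequality forces that mass to vanish. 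The strict inequality your Hopf lemma was meant to supply is generated by the parameter $t>0$, not by any boundary regularity; without it the argument never gets past the non-strict conclusion $\sigma|_L=0$.
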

\begin{proof}
Let $D = D(x^0,\alpha)$ and $\Omega_t = \Omega(K,(1+t)\alpha \delta_{x^0}).$
We know that $\partial D \subset \overline{\Omega_0}$, and hence either $D \subset \Omega_0$, or there must be some component
$S$ of $K \setminus \overline{\Omega_0}$ which is contained in $D$. We assume the latter to derive a contradiction.

Now we note that for every $t >0$ the set
$$\omega_t = \omega(K,(1+t) \lambda |_D)$$
is also a positive harmonic ball with respect to $x^0$ and size $(1+t)\alpha$. To see this we simply note that 
$$(1+t) (\alpha G_K (x^0,\cdot) - G_K(\lambda |_D)) \leq (1+t) \alpha G_K(x^0,\cdot) - G_K (\lambda |_{\omega_t}).$$
By Kato's inequality it follows that the Laplacian of the right hand side dominates that of the left hand side on $\partial K$, because the function $w=(1+t) \alpha G_K(x^0,\cdot) - G_K (\lambda |_{\omega_t}) - (1+t) (\alpha G_K (x^0,\cdot) - G_K(\lambda |_D))$ is positive in $K$ and zero on $K^c$, so $(\Delta w) |_{K^c} \geq 0$. 

But it is also clear that for small $t$ the set $\Omega_t$ does not contain $S$. Indeed it is easy to see that for small $t$ there is a point $ y \in \partial K$ and $\varepsilon >0$ such that $B_{\varepsilon}(y) \cap K \subset S$, $B_{\varepsilon}(y) \cap \overline{\Omega_t} = \emptyset$ and $L = \partial K \cap B_{\varepsilon}(y)$ is non-polar.

We also know by proposition \ref{shdom} that 
$$(1+t) (\alpha G_K (x^0,\cdot) - G_K(\lambda |_{\omega_t})) \leq (1+t) \alpha G_K(x^0,\cdot) - G_K (\lambda |_{\Omega_t}).$$
But as in the previous argument it follows from Kato's inequality that the Laplacian of the left hand side restricted to $L$ must be zero. This contradicts lemma \ref{pmeas}.
\end{proof}
An immediate consequence of the above results is the following:
\begin{cor}
If $K$ is starshaped with respect to $x^0$, then there is only one positive harmonic ball $D(x^0,\alpha)$ with center $x^0$and size $\alpha$.
\end{cor}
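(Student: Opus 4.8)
The corollary will follow by feeding the two preceding theorems into one another. Writing $\omega := \omega(K,\alpha\delta_{x^0})$ and $\Omega := \Omega(K,\alpha\delta_{x^0})$, the plan is to show that \emph{every} positive harmonic ball $D(x^0,\alpha)$ coincides, up to a Lebesgue null set, with the subharmonic ball $\omega$. Since $\omega$ is itself a positive harmonic ball, this is precisely the asserted uniqueness.

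First I would invoke the theorem on starshapedness: because $K$ is starshaped with respect to $x^0$, the set $\omega$ is starshaped with respect to $x^0$ and $\Omega = \omega$. Recalling from the partial balayage section that $\omega$ is a \emph{bounded} open subset of $K$, I would then use the topological fact that the complement of a bounded open set which is starshaped with respect to an interior point is connected; hence $\Omega^c$ consists of a single component. This single component contains all of $K^c$ (as $\Omega \subset K$) and therefore contains every component of $K^c$, in particular a non-polar one, so that the hypothesis of the preceding theorem is satisfied. (In the plane this non-polarity is automatic, since $K$ being Greenian means $K^c$ is non-polar while starshapedness forces $K^c$ to contain a half-line, a non-polar continuum.) Applying that theorem yields $D \subset \Omega = \omega$ for every positive harmonic ball $D = D(x^0,\alpha)$.

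It then remains to establish the reverse inclusion at the level of measures. From $D \subset \omega$ one gets $\lambda|_D \le \lambda|_\omega$, and hence $G_K \lambda|_D \le G_K \lambda|_\omega$ in $K$ by linearity and positivity of the Green potential. On the other hand, Proposition~\ref{shdom}, applied with the harmonic ball $D$ and the subharmonic ball $\omega$, supplies the opposite inequality $G_K \lambda|_\omega \le G_K \lambda|_D$. Thus $G_K \lambda|_D = G_K \lambda|_\omega$, and applying $-\Delta$ to both sides (recall $-\Delta G_K \mu = \mu$ in $K$ and $G_K \mu \equiv 0$ on $K^c$, with both measures supported in $K$) forces $\lambda|_D = \lambda|_\omega$. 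Consequently $\lambda(D \triangle \omega) = 0$, which is the desired uniqueness up to a null set.

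The potential-theoretic monotonicity and the final identification of the two measures are routine. The step that really needs care, and which I expect to be the main obstacle, is the verification of the hypothesis of the preceding theorem: namely the topological claim that the complement of the bounded starshaped set $\Omega$ is connected, together with the check that $K^c$ genuinely possesses a non-polar component. The delicate case is that of a small or polar $K^c$ (for instance in high dimensions, where a half-line need not be non-polar), and there one must use the starshaped geometry of $K$ itself to exhibit a non-polar piece of $K^c$ sitting inside the single component of $\Omega^c$.
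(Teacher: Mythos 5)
Your proposal follows exactly the route the paper intends: the paper offers no written proof beyond calling the corollary ``an immediate consequence of the above results'', and the chain you describe --- the starshapedness theorem gives $\Omega(K,\alpha\delta_{x^0})=\omega(K,\alpha\delta_{x^0})$ starshaped, hence $\Omega^c$ connected; the preceding theorem then gives $D\subset\Omega$ for every positive harmonic ball $D$; and Proposition~\ref{shdom} combined with $D\subset\omega$ forces $G_K\lambda|_D=G_K\lambda|_\omega$, whence $\lambda(D\triangle\omega)=0$ --- is precisely that chain made explicit. The one point worth pressing is the one you flag yourself. In the plane your argument is complete: $K$ Greenian forces $K^c$ non-polar, and since each component of $K^c$ contains a ray (for $y\in K^c$ the whole ray from $y$ away from $x^0$ lies in $K^c$ by starshapedness), each component contains a continuum and is therefore non-polar. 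But in dimension $n\geq 3$ a half-line \emph{is} polar, so ``contains a ray'' yields nothing, and your proposed remedy --- exhibit a non-polar piece of $K^c$ --- cannot succeed when $K^c$ is itself polar (e.g.\ $K=\R^3$ minus a closed ray, which is starshaped with respect to a suitable point and Greenian). The correct repair in that case is different: a polar complement does not alter the Green function, so $G_K$ coincides with the Newtonian kernel and the statement reduces to the classical uniqueness of harmonic balls in $\R^n$ that the paper cites earlier. With that caveat noted, the proof is correct and is the same argument the paper has in mind.
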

\begin{quest}
Is there any case where there is a non-polar component of $\Omega(K,\alpha \delta_{x^0})^c$ which does not contain a non-polar component of $K^c$?
For instance if $K$ contains no holes, can we draw the same conclusion about $\Omega(K,\alpha \delta_{x^0})$?
\end{quest}
\begin{conj}
Every positive harmonic ball is a subharmonic ball.
\end{conj}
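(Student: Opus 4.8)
The plan is to reduce the conjecture to the single containment $D(x^0,\alpha)\subset\Omega(K,\alpha\delta_{x^0})$ and then to close the argument by a maximum-principle comparison. Write $\Omega=\Omega(K,\alpha\delta_{x^0})$ for the essentially unique subharmonic ball of Proposition~\ref{existsh}, let $D$ be a positive harmonic ball with the same centre and size, and set $u_D=\alpha G_K(x^0,\cdot)-G_K(\lambda|_D)$. By the characterisation of harmonic and subharmonic balls given in the first theorem of Section~2, being a subharmonic ball is equivalent to $u_D\ge0$ in $K$ (we already have $u_D=0$ in $K\setminus D$), so it suffices to prove $u_D\ge0$ inside $D$.

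First I would record the easy second half. Suppose it is already known that $D\subset\Omega$ up to a null set, and consider $v=G_K(\lambda|_D)-G_K(\lambda|_\Omega)$. By Proposition~\ref{shdom} we have $v\ge0$ in $K$, while $\Delta v=\chi_\Omega-\chi_D=\chi_{\Omega\setminus D}\ge0$, so $v$ is subharmonic in $K$; moreover $v=0$ on $\partial K$ (and at infinity) because it is a difference of Green potentials of compactly supported measures. The maximum principle then forces $v\le0$, hence $v\equiv0$, so $\chi_D=\chi_\Omega$ almost everywhere and $D=\Omega$ up to a null set. In particular $u_D=u_\Omega\ge0$ and $D$ is a subharmonic ball. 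Thus the whole conjecture is equivalent to the assertion that \emph{every} positive harmonic ball is contained in its subharmonic ball.

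To establish that containment I would run the perturbation scheme already used in the theorem preceding the conjecture. Assuming $D\not\subset\Omega$, the inclusion $\partial D\cap K\subset\overline\Omega$ from Proposition~\ref{shdom} forces a whole component $S$ of $K\setminus\overline\Omega$ to lie inside $D$. Passing to $\Omega_t=\Omega(K,(1+t)\alpha\delta_{x^0})$ and to $\omega_t=\omega(K,(1+t)\lambda|_D)$ — which, exactly as in that proof, is again a positive harmonic ball — one compares the three Green potentials by Kato's inequality and finds that the swept mass carried by a boundary patch $L=\partial K\cap B_\varepsilon(y)$ sitting against $S$ must vanish, contradicting the strict positivity furnished by Lemma~\ref{pmeas}; hence $D\subset\Omega$.

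The hard part is precisely the hypothesis that makes the previous step run: one needs a non-polar piece of $\partial K$ abutting the offending component $S$, i.e. every component of $\Omega^c$ must contain a non-polar component of $K^c$, which is the situation flagged in the Question preceding the conjecture. The positivity of the sweeping $(\alpha\delta_{x^0}-\lambda|_D)^{K^c}$ is information \emph{only on $\partial K$}, so a component of $\Omega^c$ that is screened from $\partial K$ is invisible to it and Lemma~\ref{pmeas} provides no leverage there; note also that $u_D$ is merely subharmonic in $K\setminus\{x^0\}$, so no ordinary minimum principle yields the lower bound $u_D\ge0$ directly. Removing this topological restriction — presumably by a more global balayage or energy comparison that transports the boundary positivity into the hidden components — is the essential obstacle, and is the reason the statement is offered only as a conjecture.
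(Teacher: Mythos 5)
This statement is labelled a conjecture in the paper, and the paper offers no proof of it; your proposal, to its credit, does not actually claim to supply one either. What you have written is a correct reduction together with an honest identification of the genuine gap, and that gap coincides exactly with why the authors leave the statement open. Your second step is sound: if $D\subset\Omega$ up to a null set, then $v=G_K(\lambda|_D)-G_K(\lambda|_\Omega)$ is nonnegative by Proposition~\ref{shdom}, subharmonic because $\Delta v=\chi_{\Omega\setminus D}\ge0$, and dominated by a Green potential, hence $v\equiv0$, forcing $D=\Omega$ up to a null set and $u_D=u_\Omega\ge0$; this is essentially the uniqueness argument already present in the remark following Proposition~\ref{shdom}. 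Your third step correctly reproduces the perturbation argument of the theorem on positive harmonic balls, and you are right that it runs only under the hypothesis that every component of $\Omega(K,\alpha\delta_{x^0})^c$ contains a non-polar component of $K^c$ (equivalently, in concrete cases such as $K$ starshaped, via the corollary).

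The gap is therefore real and is precisely the one you name: when a component of $\Omega^c$ is screened from $\partial K$, the positivity hypothesis --- which is a statement about the swept measure on $\partial K$ only --- gives no contradiction via Lemma~\ref{pmeas}, and since $u_D$ is only subharmonic away from $x^0$ there is no minimum principle forcing $u_D\ge0$ in $D$ directly. No argument in the paper, and none in your proposal, bridges this; the ``more global balayage or energy comparison'' you gesture at is not constructed. So the proposal should be read as a correct proof of the conjecture under the additional topological hypothesis of the preceding theorem (matching what the paper already establishes), together with an accurate diagnosis of the open obstruction, rather than as a proof of the conjecture itself.
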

At the very least this seems reasonable if $K$ satisfies weaker conditions than being starshaped as above.
\section {The Two-Phase Schwarz function}
In this section we will be working in $\R^2=\C$ and it is natural to use complex notation $z=x+iy$. To begin with let $K \subset \C$ be Greenian and let $D=D(z^0,\alpha)$ be a harmonic ball in $K$. We also assume that $D$ is sufficiently regular so that the function $u= \alpha G_K(z^0,\cdot) - G_K(\lambda |_D)$ satisfies $u=|\nabla u|=0$ in $K \setminus D$. Now let $v = 4 \partial u$, so that 
$\overline{\partial} v = \Delta u = \lambda - \delta_{z^0}$ in $D$ and $v=0$ on $\partial D \cap K$. However we have little information about the behavior of $v$ on $\partial D \cap \partial K$.

If we try to have a mean-value property for $D$, with respect to analytic functions
then by setting $S=\bar z - v$, one using that $\bar \partial S= \delta_0$ and $S=\bar z$ on  $\partial D \cap K$, along with the complex-version of
Stoke's formula 
$$
\int_D f d\lambda = \int_{\partial D} f(z)\bar z dz = \int_{\partial D} f(z) S(z) dz  + I =
f(z^0) + I
$$
where $I = \int_{\partial D \cap \partial K} f(z)(\bar z - S(z)) dz$. Now to make sense of the above expression, we would like to have $I=0$ 
for a reasonable subclass of  analytic functions on $D$. If we  consider analytic functions in $D$ that vanish on
 $\partial D \cap \partial K$, then (as long as $\partial D \cap \partial K$ is not very small) this class only contains the zero function.   

 The question is whether this is the end of the story! Here we shall try to develop a two-phase version of the above problem, and circumvent the 
 above difficulty of defining holomorphic discs. Indeed, we shall show that one can still have a quadrature identity but in two phases. In this way
 we need not to assume zero boundary data for analytic functions (which reduces the class to the zero function). The balance of the boundary value
then comes from both sides of the boundary, and the boundary values are canceled. 
The concept needed for this is the definition of a two-phase Schwarz function. To define this we need two disjoint bounded open sets $D_+,D_-$ and two positive constants $\beta_+,\beta_-$. The definition will only be of interest if $\Gamma$ contains some curve/is not to small. 
\begin{defi}\label{tpsz}
Let $D_+,D_-$ be disjoint bounded open sets, and let $\beta_+,\beta_-$ be positive constants. Also let $D = D_+ \cup D_-$ and $\Gamma = \partial D_+ \cap \partial D_-$. Suppose that there are compact subsets $C_{\pm} \subset D_{\pm}$ and functions
$S_{\pm} \in C(\overline{D}_{\pm}) \cap A(D_{\pm} \setminus C_{\pm})$ such that
\begin{eqnarray*}
S_{\pm}(z) &=& \pm \beta_{\pm} \overline{z} \quad z \in \partial D_{\pm} \setminus \Gamma,\\
S_+(z) - S_-(z) &=& (\beta_+ + \beta_-)\overline{z} \quad z \in \Gamma,\end{eqnarray*}
then we say that the pair $(S_+,S_-)$ is a two-phase Schwarz function for $(D_+,D_-)$.
\end{defi}
\begin{rem}
All information of interest of the two-phase Schwarz function is really contained in the function 
$$S(z) = \left\{ \begin{array}{ll} S_+(z) & z \in \overline{D}_+ \setminus \Gamma\\
									  S_-(z) & z \in \overline{D}_- \setminus \Gamma, \end{array}\right.$$
and we note that
$$\lim_{w \rightarrow z, w \in D_+} S(w) - \lim_{w \rightarrow z, w \in D_-} S(w) =	(\beta_+ + \beta_-)\overline{z}, \quad z \in \partial D_+ \cap \partial D_-.$$
We will also refer to this function as the two-phase Schwarz function (the only difference is that we do not have any values on $\Gamma$, where the function $S$ typically would have discontinuities). 

Also note that if there are functions $S_{\pm} \in C(D_{\pm} \setminus C_{\pm}) \cap A (D_{\pm} \setminus C_{\pm})$ satisfying the boundary conditions in the above definition, then we may (enlarging $C_{\pm}$ slightly if necessary) extend $S_{\pm}$ to become continuous, and even smooth, in all of $D_{\pm}$. In particular we see that having a two-phase Schwarz function is a local property of the boundaries $\partial D_{\pm}$. 	

Furthermore note that by definition the distribution $\overline{\partial}S|_D$ has compact support in $D$. 
\end{rem}

Our interest in this function comes from its connection with two-phase quadrature domains for analytic functions which we now define.
\begin{defi}
The pair $(D_+,D_-)$ is said to be a ($(\beta_+,\beta_-)-$) two-phase quadrature domain for analytic functions with respect to the distribution $\mu$ if $\mu$ has compact support in $D= D_+ \cup D_-$ and 
$$\beta_+ \int_{D_+} f d \lambda - \beta_- \int_{D_-} f d \lambda = \langle \mu, f \rangle$$
for all $f \in A(D)\cap C(\overline{D}).$
\end{defi}
\begin{rem}
The choice of test function class $A(D)\cap C(\overline{D})$ compared to the class $AL^1$ used in the one-phase case is chosen due to that the gradient $\nabla u$ does not vanish on $\Gamma$ typically. The choice is in some respects not optimal (just like for the case of harmonic functions which is discussed in \cite{GS3}) in the sense that it does not give us a complete equivalence between the concept of having vanishing gradients as in the definition of the two-phase Schwarz function and having a two-phase quadrature domain for analytic functions. However this problem is not a major one in the sense that if the boundaries are smooth enough, then they are easily seen to be equivalent by approximation. 

It is furthermore clear that if $(D_+,D_-)$ is a two-phase quadrature domain for harmonic functions, then it is so also for analytic functions.
\end{rem}
The next theorem, which can be sharpened when it comes to the assumption of the regularity of $\partial D_{\pm}$, explains the connection between the two-phase Schwarz function and two-phase quadrature domains for analytic functions.  
\begin{theo}
If $(D_+,D_-)$ is a pair of disjoint bounded domains such that $\partial D_{\pm}$ are piecewise $C^1$, then it has a two-phase Schwarz function if and only if it is a two-phase quadrature domains for analytic functions with respect to some distribution.
\end{theo}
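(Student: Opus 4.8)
The plan is to route both implications through the single function $u = (U\mu_+ - \beta_+ U(\lambda|_{D_+})) - (U\mu_- - \beta_- U(\lambda|_{D_-}))$ and its complex gradient $v = 4\partial u$, exactly as in the one-phase discussion of the Introduction. The guiding observation is that a two-phase Schwarz function exists precisely when $v$ vanishes on $\partial D\setminus\Gamma$ (from inside) while its boundary values agree across $\Gamma$, and that this is also what the analytic quadrature identity encodes; so the task in each direction is to pass between a statement about $\int_{D_\pm}f\,d\lambda$ and a statement about the boundary values of $v$.

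For the direction ``Schwarz function $\Rightarrow$ quadrature domain'' one does not even need $u$. Given $(S_+,S_-)$, set $\mu_\pm = \pm\overline{\partial}S_\pm|_{D_\pm}$, which are distributions with compact support $C_\pm\subset D_\pm$ by the remark following the definition, and put $\mu = \mu_+-\mu_-$. For $f\in A(D)\cap C(\overline{D})$ I would apply the complex form of Stokes' theorem (in the normalization of the Introduction) on each of $D_+$ and $D_-$ separately to write $\beta_+\int_{D_+}f\,d\lambda = \oint_{\partial D_+}f\,\beta_+\overline{z}\,dz$ and $-\beta_-\int_{D_-}f\,d\lambda = \oint_{\partial D_-}f\,(-\beta_-\overline{z})\,dz$, the piecewise $C^1$ regularity of $\partial D_\pm$ being exactly what legitimises these contour integrals. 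On $\partial D_\pm\setminus\Gamma$ I replace $\pm\beta_\pm\overline{z}$ by $S_\pm$ via the boundary condition, and on $\Gamma$ --- traversed with opposite orientations as part of $\partial D_+$ and of $\partial D_-$ --- I use the jump relation $S_+-S_-=(\beta_++\beta_-)\overline{z}$ to recombine the two $\Gamma$-contributions, so that the sum of the boundary terms becomes exactly $\oint_{\partial D_+}fS_+\,dz + \oint_{\partial D_-}fS_-\,dz$ over the full boundaries. A second application of Stokes, now to the functions $fS_\pm$ which are analytic off $C_\pm$ and satisfy $\overline{\partial}(fS_\pm)=f\,\overline{\partial}S_\pm=\pm f\mu_\pm$, collapses each full boundary integral onto $C_\pm$, the sum being $\langle\mu,f\rangle$. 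This is the quadrature identity with respect to $\mu$.

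For the converse ``quadrature domain $\Rightarrow$ Schwarz function'' I would decompose $\mu=\mu_+-\mu_-$ with $\mu_+=\mu|_{D_+}$ and $\mu_-=-\mu|_{D_-}$ (legitimate since $\operatorname{supp}\mu$ is compact in the disjoint union $D_+\cup D_-$), form $u$ and $v=4\partial u$, and read off that $S_\pm:=\pm\beta_\pm\overline{z}-v$ satisfy $\overline{\partial}S_\pm=\pm\mu_\pm$, hence are analytic in $D_\pm\setminus C_\pm$. The real content is the boundary behaviour. Testing the quadrature identity against the Cauchy kernels $f(z)=1/(z-a)$ with $a\notin\overline{D}$ (which lie in $A(D)\cap C(\overline{D})$) shows that the Cauchy transform $\int d\nu(\zeta)/(z-\zeta)$ of $\nu=\mu_+-\mu_--\beta_+\lambda|_{D_+}+\beta_-\lambda|_{D_-}=-\Delta u$ vanishes on $(\overline{D})^c$, that is $v=0$ there. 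Since away from $C_\pm$ the density of $\nu$ is bounded, $u$ is the logarithmic potential of a bounded density in a full neighbourhood of each point of $\partial D\setminus\Gamma$ and of $\Gamma$, hence of class $C^1$ across those sets; therefore $\nabla u$, and so $v$, extends continuously across them. Continuity together with $v\equiv0$ in the exterior forces $v=0$ on $\partial D\setminus\Gamma$ from the inside (giving $S_\pm=\pm\beta_\pm\overline{z}$ there), while continuity of $v$ across $\Gamma$ gives $v_+=v_-$ and hence $S_+-S_-=(\beta_++\beta_-)\overline{z}$ on $\Gamma$. Thus $(S_+,S_-)$ is a two-phase Schwarz function.

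I expect the main obstacle to be the boundary-regularity bookkeeping in the converse: ensuring that $v$ genuinely extends continuously up to $\partial D_\pm$, so that $S_\pm\in C(\overline{D}_\pm)$ and the interior and exterior limits may be identified, under only piecewise $C^1$ boundaries, together with the possibility that a too-singular $\mu$ could prevent $S_\pm$ from being continuous on all of $\overline{D}_\pm$ near $C_\pm$. This is precisely the mismatch between the test class $A(D)\cap C(\overline{D})$ and the pointwise gradient condition flagged in the remark above, and it is where an approximation of $D_\pm$ by smoother domains, or a restriction to sufficiently regular $\mu$, is needed. By contrast the Stokes and contour manipulations, and the cancellation on $\Gamma$, are routine once the orientations are fixed.
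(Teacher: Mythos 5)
Your proposal is correct and follows essentially the same route as the paper: the forward direction is the identical Stokes'-theorem computation with the orientation-reversal cancellation on $\Gamma$, and the converse defines the same $u$ and $S_{\pm}=\pm\beta_{\pm}\overline{z}-4\partial u$, with your Cauchy-kernel test and boundary-continuity discussion merely making explicit what the paper compresses into ``by assumption $\partial u=0$ in $(\overline{D})^c$, and by continuity this extends.''
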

\begin{proof}
Let us first assume that $(D_+,D_-)$ has a two-phase Schwarz function $S$ and let $f$ be analytic in $D$ and continuous on $\overline{D}$.
By assumption $\overline{\partial} S = \mu$ has compact support in $D$. Now we get by the use of Stoke's theorem (where we now use that the boundaries are smooth enough):
\begin{eqnarray*}&&\int_{D_+} \beta_+ f d \lambda - \int_{D_-} \beta_- f d \lambda = \int_{\partial D_+} \beta_+ f(z) \overline{z} dz + \int_{\partial D_-} -\beta_- f(z) \overline{z} dz =\\
&&\int_{\partial D_+} f(z) S(z) dz + \int_{\partial D_-} f(z )S(z) dz = \langle \mu, f \rangle.
\end{eqnarray*}
(Note that we above used that the boundaries $\partial D_+$ and $\partial D_-$ have opposite orientations on $\partial D_+ \cap \partial D_-$. This is why the quantity $S_+ - S_-$ is relevant on this set.)

Another way to reason is to use the fact that with $\Omega= (\overline{D})^o$, all functions $f$ as above are automatically analytic in $\Omega$ as-well, which for instance follows easily from Morera's theorem. Since the linear span of the Cauchy kernels with poles in $\Omega^c$ are dense in $AL^1(\Omega)$ it follows from this observation that 
$$\beta_+ \int_{D_+} f d \lambda - \beta_- \int_{D_-} f d \lambda = \int f d \mu.$$

Conversely let us assume that $(D_+,D_-)$ is a two-phase quadrature domain with respect to $\mu$, and define 
$$u= U \mu - \beta_+ U (\lambda |_{D_+}) + \beta_- U (\lambda |_{D_-}).$$
By assumption we have $\partial u =0$ in $(\overline{D})^c$, and by continuity this extends to $\overline{((\overline{D})^c)}$. Due to the assumption on $\partial D_{\pm}$ we see that this set contains $\partial D_{\pm} \setminus \Gamma$, and if we define
$$S_{\pm} = \pm \beta_{\pm} \overline{z} - 4 \partial u \textrm{ in } \overline{D}_{\pm},$$
then it is easy to see that these satisfy the requirements in the definition of a two-phase Schwarz function.
\end{proof}
\begin{rem}
Note that it follows from the above proof that a two-phase Schwarz function is uniquely determined close to the boundary, in the sense that two different Schwarz functions for $(D_+,D_-)$ must be equal outside a compact set in $D=D_+ \cup D_-$.
\end{rem}
One contrast with the one-phase case is that as we saw above typically most points of $\partial D_+ \cap \partial D_-$ will be removable singularities for the analytic functions which are continuous up to the boundary. This leads to that there are an abundance of two-phase quadrature domains for analytic functions, even with $\mu=0$ (so called null quadrature domains).
For example let $D_+$ be a simply connected domain with smooth boundary. Let $D_- = \omega(2 \lambda |_{D_+}) \setminus D_+$. Then it is easy to see by definition that we have 
$$\int_{D_+} h d \lambda - \int_{D_-} h d \lambda=0$$
for all functions which are integrable and harmonic in  $\Omega = (\overline{D_+ \cup D_-})^o=\omega(2 \lambda |_{D_+})$. Note that $\partial D_+ \subset \Omega$ under the above circumstances. Furthermore as above all functions $f$ which are continuous in $\Omega$ and analytic in $D_+ \cup D_-$ are automatically analytic in $\Omega$.
Therefore we have 
$$\int_{D_+} f d \lambda - \int_{D_-} f d \lambda =0$$
for all functions which are continuous on $\overline{\Omega}$ and analytic in $D_+ \cup D_-$.
Indeed the assumption that $\partial D_+$ is smooth can be substantially relaxed and hence we can not get any general results regarding the regularity of the boundary just from such a quadrature identity. (The same remark could be said about relaxing the assumption that $u=0$ on $\partial D_+ \cap \partial D_-$ in the definition of a two-phase quadrature domain for harmonic functions.)
This should be compared with the results of \cite{Shahgholian-Uraltseva 2003,Shahgholian-Uraltseva-Weiss 2007} which shows that a two-phase quadrature domain for harmonic functions the boundary locally consists of one or two $C^1$ graphs (but in general not $C^{1,\alpha}$). Also note that on the set $\partial D_{\pm} \setminus \Gamma$ we are in the one-phase situation locally, which is treated in \cite{Sakai 91}, and it means that this part of the boundary is essentially real analytic locally (apart from possibly a finite number of singularities).

About existence of two-phase quadrature domains for harmonic functions, which hence gives plenty of examples of $(D_+,D_-)$ which has a two-phase Schwarz function, there are some results in \cite{Em-Pr-Sh 2010,GS3}. For instance the results in \cite{GS3} implies that if $\mu_+,\mu_-$ both are finite linear combinations of point-masses, then there is a two-phase quadrature domain $(D_+,D_-)$ with respect to $(\mu_+,\mu_-)$.

The simplest ``construction'' of such two-phase quadrature domains (and hence two-phase Schwarz functions) known relies on reflection. We now give an example of this, and then we also give an abstract generalization in terms of the Schottky double due to Bj\"o{}rn Gustafsson.

\begin{example}
Let $z^0 \in H_+$ where $H_{\pm} = \{x \pm iy: y>0\}$. Then we define
$$u = \left\{ \begin{array}{ll} \alpha G_{H_+}(z^0,\cdot) - G_{H_+} (\lambda |_{\omega(H_+,\delta_{z^0})}) & \textrm{ in } H_+\\
0 & \textrm{ in } \R\\
-\alpha G_{H_+}(\overline{z^0},\cdot) + G_{H_-} (\lambda |_{\omega(H_-,\delta_{\overline{z^0}})}) & \textrm{ in } H_-.\end{array}\right.$$
It is easy to verify directly that this makes $(D_+,D_-)= (\omega(H_+,\delta_{z^0}),\omega(H_-,\delta_{\overline{z^0}}))$ a two-phase quadrature domain with respect to $(\mu_+,\mu_-)= (\delta_{z^0},\delta_{\overline{z^0}})$.
(Note that $u$ in the lower half plane is an odd reflection of $u$ from the upper half plane.)
So if we define 
$$S = \left\{ \begin{array}{lr} \beta_+ \overline{z} - 4 \partial u & \textrm{ in } D_+\\
								 -\beta_- \overline{z} - 4 \partial u & \textrm{ in } D_-\end{array}\right.,$$
then we have an example of a two-phase Schwarz function. It should be remarked however that this is the most simple non-trivial example of a two-phase quadrature domain (where trivial means that we have two disjoint one-phase quadrature domains) and even here it seems hard to actually calculate explicitly what the solution is. 
\end{example}

\begin{example}\label{Schottky}
An abstract generalization of the above example can be given using the so called Schottky double. Let $\Omega \subset \C$ be a domain (with sufficiently smooth boundary), $\Gamma \subset \partial \Omega$ a subarc and a positive measure $\mu$ with compact support in $\Omega$. Suppose 
$$u= G_{\Omega} \mu - G_{\Omega} (\lambda |_{\Omega}) \textrm{ in } \overline{\Omega}$$
satisfies $|\nabla u|=0$ on $\partial \Omega \setminus \Gamma$. This is equivalent to saying that for every function $h$ which is continuous on $\overline{\Omega}$, harmonic in $\Omega$ and zero on $\Gamma$ satisfies:
$$\int_{\Omega}h d\lambda = \int h d\mu.$$
Now let $\widetilde{\Omega}$ be a copy of $\Omega$ with the opposite conformal structure, and let $\widehat{\Omega} = \Omega \cup \Gamma \cup \widetilde{\Omega}$ be a partial Schottky double of $\Omega$ welded only along $\Gamma$. We let $\widetilde{\mu}$ be the corresponding ``reflection'' of $\mu$ onto $\widetilde{\Omega}$.
We have a natural bijection $z \mapsto \widetilde{z}$ from $\Omega$ onto $\widetilde{\Omega}$ and vice versa (corresponding to the complex conjugate in the previous example), and if we extend $u$ to $\widehat{\Omega}$ by $u(\widetilde{z}) = -u(z)$ for $z \in \Omega$, then we clearly are in a similar situation as above where we can view $(\Omega,\widetilde{\Omega})$ as a two-phase quadrature domain for harmonic functions with respect to $(\mu,\widetilde{\mu})$. I.e., if $h$ is a function which is continuous on the closure of $\widehat{\Omega}$ (in the obvious sense) and harmonic in $\Omega \cup \widetilde{\Omega}$, then we have
$$\int_{\Omega} h d \lambda - \int_{\widetilde{\Omega}} h d \lambda = \int h d \mu - \int h d \widetilde{\mu}.$$
To see this we simply define
$$h_e(z) = \frac{1}{2} (h(z)+h(\widetilde{z})),$$
$$h_o(z) = \frac{1}{2} (h(z)-h(\widetilde{z})),$$
and then it is easy to see that the statement holds for $h_e$ and $h_o$ separately (in the first case both sides are trivially zero by symmetry and in the second we simply use that $h_o$ is zero on $\Gamma$).
By linearity the statement follows since $h=h_e+h_o$.
\end{example}

\section {Further perspectives}
When it comes to harmonic balls one interesting problem is to develop the corresponding theory of harmonic spheres. This is naturally more difficult, and it should be compared to the corresponding theory of quadrature domains where one also allows surface measures on the boundary of the domain. There are methods to show existence also in this case in e.g. \cite{Gustafsson-Shahgholian 96}. 

For more information on the one-phase Schwarz function we refer to \cite{Davis 74,Shapiro 92}. This function is connected to a large number of other branches of mathematics (the references below are not in any way trying to be complete):

{\it
1)  Moment problems,  Operator Theory (hyponormal operators), Exponential Transforms (see \cite{Gustafsson-Putinar 98,Gustafsson-Putinar 00,McCarthy-Yang 97, Putinar 96, Putinar 98a,Shapiro-Putinar 00,Sakai 78,Xia 96, Xia 03, Yakubovich 01},

2) Hele-Shaw related problems (see \cite{Crowdy 04,Gustafsson-Vasiliev 04,Hedenmalm-Shimorin 02}),

3) Problems related to Bergman and Szeg\"o{} Kernels (see \cite{Bell 03,Bell 04}),

4) Mother bodies and skeletons (see \cite{Gustafsson 98,Gustafsson-Sakai 99,Savina-Sternin-Shatalov 95,Sjodin 02},

5) The Cauchy Problem in $\C^n$ (see \cite{Ebenfelt 93,Johnsson 94,Shapiro 89}),
 
6) Quadrature Surfaces (see \cite{Shahgholian 94,Shahgholian 94b} }.

 It is tantalizing and wishful to think that many of these concepts can be developed also in the two-phase situation. That, however, 
the future will decide.
Here we would also like to point to the possibility to treat two-phase quadrature domains with surface measures as in  \cite{Gustafsson-Shahgholian 96}. Another possibility is to try to look at the problem that corresponds to two-phase quadrature domains for analytic functions in higher dimensions. This would mean that we just assume that the gradient of the function $u$ as defined in (\ref{little u}) in the introduction only satisfies $\nabla u=0$ in $(D \cup \Gamma)^c$ (where again $\Gamma = \partial D_+ \cap \partial D_-$).  Note that just as in two dimensions the boundary need not be very regular for such domains unfortunately.

\end{document}